\newtheorem{thm}{Theorem}[section]
\newtheorem{lem}[thm]{Lemma}
\newtheorem{core}[thm]{Corollary}
\def\Sg{{\rm Sg}}
\numberwithin{equation}{section}
\begin{document}
	
\begin{center}
	
	{\Large \bf Bilateral truncated quintuple product}
\end{center}

\begin{center}
	{  Wenxia Qu}$^{1}$,  and
	{Wenston J.T. Zang}$^{2}$ \vskip 2mm
	
	$^{1,2}$School of Mathematics and Statistics, Northwestern Polytechnical University, Xi'an 710072, P.R. China\\[6pt]
	$^{1,2}$ MOE Key Laboratory for Complexity Science in Aerospace, Northwestern Polytechnical University, Xi'an 710072, P.R. China\\[6pt]
	$^{1,2}$ Xi'an-Budapest Joint Research Center for Combinatorics, Northwestern Polytechnical University, Xi'an 710072, P.R. China\\[6pt]
	\vskip 2mm
	
	$^1$quwenxia0710@mail.nwpu.edu.cn, $^2$zang@nwpu.edu.cn
\end{center}

\vskip 6mm \noindent {\bf Abstract.}  In this paper, we present the bilateral truncated identity of the quintuple product identity, which is a generalization of the truncated quintuple product identities given by Chan, Ho and Mao [J. Number Theory 169 (2016) 420--438]. Additionally, we provide the bilateral truncated forms of two $q$-series identities, which are well-known consequences of the quintuple product identity.

\noindent {\bf Keywords}: integer partitions, truncated identity, quintuple product identity.

\noindent {\bf AMS Classifications}: 05A17, 05A20, 11P81.

\section{Introduction}

The quintuple product identity plays a crucial role in  $q$-series. This identity was discovered about 110 years ago, and it has at least 29 different proofs. These proofs involve many different approaches, including basic hypergeometric series, elliptic functions, sigma functions and so on.  See \cite{cooper2006quintuple} for more details.

Andrews and Merca \cite{andrews2012truncated} initiated the study of the truncated $q$-series identity by giving the truncated Euler pentagonal number theorem.  Guo and Zeng \cite{guo2013two} gave two truncated Gauss identities. The truncated Jacobi triple product was established by Mao \cite{mao2015proofs} and Yee \cite{yee2015truncated} independently (see Wang and Yee \cite{wang2019truncated} for an explicit form,  Merca \cite{merca2021truncatedtheta} for a stronger conjecture and \cite{Ballantine-FeigonTruncated, Ding-SunTruncated, Ding-SunProof} for some partial result of this conjecture). In addition, there were some studies related to bilateral truncated identities. For example, He, Ji and Zang \cite{he2016bilateral} gave bilateral truncated Jacobi identities, and Li \cite{li2023generalized} established a bilateral truncated Jacobi triple product. We remark that the truncated identities have been recently studied in several papers by  Andrews and Merca \cite{andrews2018truncated}, Chern and Xia \cite{chern2024two}, Merca \cite{merca2022two},  Wang and Yee \cite{wang2020truncated, wang2023truncated}, Xia, Yee and Zhao \cite{xia2022new} and Zhou \cite{zhou2024positivity}.

This paper focuses on the bilateral truncated form of the quintuple product identity. Recall that the quintuple product identity usually has the following form \cite[pp. 118]{cooper2006quintuple},
\begin{equation}\label{cooper-quituple-product}
	\sum_{n=-\infty}^{\infty}q^{n(3n+1)/2}(x^{3n}-x^{-3n-1})=(x^{-1},xq,q;q)_{\infty}(x^{-2}q,x^2q;q^2)_{\infty}.
\end{equation}
Here we use the standard $q$-series notation
\[(a;q)_n=\prod_{i=1}^{n}(1-aq^{i-1}),\quad
(a;q)_\infty=\prod_{i=1}^{\infty}(1-aq^{i-1})\]
and
\[(a_1,a_2,\ldots,a_k;q)_n=\prod_{i=1}^k(a_i;q)_n,\quad (a_1,a_2,\ldots,a_k;q)_\infty=\prod_{i=1}^k(a_i;q)_\infty.\]

The following equation is an alternating form of the quintuple product identity by substituting $x$ and $q$ into $q^S$ and $q^R$ in \eqref{cooper-quituple-product} respectively.
\begin{equation}\label{eq-qui-trans}	
	\sum_{n=-\infty}^{\infty}q^{n(3n+1)R/2}(q^{3nS}-q^{-(3n+1)S})=(q^{-S},q^{R+S},q^{R};q^{R})_{\infty}(q^{R-2S},q^{R+2S};q^{2R})_{\infty}.
\end{equation}

Chan, Ho and Mao \cite{chan2016truncated} found the following two truncated forms of \eqref{eq-qui-trans}.

\begin{thm}[\cite{chan2016truncated}]\label{chan2016truncated}
	For $1\leq S < R/2$ and $k\geq 0$, the truncated series
	\begin{equation}\label{tru-CHM-qu1}
		\frac{1}{(q^{-S},q^{R+S},q^R;q^R)_{\infty}(q^{R-2S},q^{R+2S};q^{2R})_{\infty}}\sum_{n=-k}^{k}q^{n(3n+1)R/2}(q^{3nS}-q^{-(3n+1)S})
	\end{equation}
	has non-negative coefficients.
\end{thm}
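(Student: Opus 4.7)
My approach is to reformulate Theorem \ref{chan2016truncated} as a non-positivity statement about the tail of the bilateral quintuple product sum, and then analyse this tail by decomposing the quintuple product into two Jacobi-triple-product-like pieces.

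Starting from the quintuple product identity \eqref{eq-qui-trans} and dividing both sides by the infinite product on its right, one obtains
\[
1 = \frac{1}{(q^{-S},q^{R+S},q^R;q^R)_\infty(q^{R-2S},q^{R+2S};q^{2R})_\infty}\sum_{n\in\mathbb Z}q^{n(3n+1)R/2}(q^{3nS}-q^{-(3n+1)S}).
\]
Subtracting the truncated series \eqref{tru-CHM-qu1} then rewrites it as $1-T_k$, where
\[
T_k=\frac{1}{(q^{-S},q^{R+S},q^R;q^R)_\infty(q^{R-2S},q^{R+2S};q^{2R})_\infty}\sum_{|n|\geq k+1}q^{n(3n+1)R/2}(q^{3nS}-q^{-(3n+1)S}).
\]
It therefore suffices to prove that $T_k$ has only non-positive Laurent coefficients in $q$.

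To control $T_k$, I would separate the bilateral sum in \eqref{eq-qui-trans} into its two theta-series halves, $\sum_n q^{n(3n+1)R/2+3nS}$ and $\sum_n q^{n(3n+1)R/2-(3n+1)S}$. Each is a Jacobi theta series in the base $q^{3R}$ and admits an infinite product form via the Jacobi triple product identity. The tail $T_k$ then splits into a difference of two truncated Jacobi theta sums, each of which, after dividing by its corresponding triple product, has known sign behaviour from the truncated Jacobi triple product identities of Mao \cite{mao2015proofs} and Yee \cite{yee2015truncated}. The remaining task is to recombine these two normalised truncations using the ratio of the quintuple product to each component triple product; these ratios are themselves infinite products, and should have positive series expansions after a short computation with theta identities.

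The principal obstacle is the reassembly of the two half-tails into a signed series with the correct sign pattern. In the Jacobi-triple-product case the truncation telescopes cleanly with the denominator, whereas here the extra factor $(q^{R-2S},q^{R+2S};q^{2R})_\infty$ and the interaction between the two theta halves leave room for sign cancellations that must be ruled out. I anticipate that a Bailey-pair argument, or an application of a suitable $_8\phi_7$ transformation specialised via the quintuple product identity, will be needed to exhibit $T_k$ as a manifestly non-positive series; the hypothesis $1\leq S<R/2$ should enter here to ensure that the constants of proportionality produced by the triple product representations of each half have the expected sign. Should the purely algebraic route prove recalcitrant, a combinatorial interpretation of $(q^{-S},q^{R+S},q^R;q^R)_\infty^{-1}(q^{R-2S},q^{R+2S};q^{2R})_\infty^{-1}$ as a generating function for bicoloured overpartitions, combined with a sign-reversing involution tracking the two theta halves, would be my fallback strategy.
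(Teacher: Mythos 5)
There is a genuine gap here. Your reduction of the statement to the non-positivity of the tail $T_k$ is correct and is the standard first step, but everything after that is a plan rather than a proof, and the plan as described does not go through. The decisive difficulty is that sign control does not survive subtraction: even if each of the two theta halves $\sum_n q^{n(3n+1)R/2+3nS}$ and $\sum_n q^{n(3n+1)R/2-(3n+1)S}$, truncated and divided by \emph{its own} Jacobi triple product, were known to have coefficients of a definite sign, their difference --- renormalised by the quintuple product rather than by either triple product --- inherits no sign information whatsoever. Moreover, the truncated Jacobi triple product theorems of Mao and Yee that you invoke concern alternating theta sums $\sum_{|n|\le k}(-1)^n q^{an^2+bn}$ divided by the corresponding triple product; the two halves of the quintuple sum carry no factor $(-1)^n$, so those results do not apply to them directly. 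The ratios of the quintuple product to the two component triple products are likewise not obviously non-negative series (the factor $(q^{-S};q^R)_\infty$ alone already introduces sign changes), and you explicitly defer the resolution of all of this to an unspecified Bailey-pair argument, an $_8\phi_7$ transformation, or a sign-reversing involution. None of these is carried out, so the principal assertion --- that $T_k$ has non-positive coefficients --- remains unproved.

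For context: the present paper does not prove this theorem at all; it is quoted from Chan, Ho and Mao \cite{chan2016truncated} and used as an ingredient in the proof of Theorem \ref{thm-2}. The actual proofs of results of this type (both in \cite{chan2016truncated} and in the analogous Lemmas \ref{lem3n+1-2} and \ref{lem6n+1-2} of this paper) do not split the quintuple sum into two triple-product halves. Instead they write the one-sided tail $\sum_{n\ge k}$ plus $\sum_{n\le -k-1}$ in closed form using Ramanujan's Lost Notebook identities (of the type of Entries 9.4.1 and 9.5.1 in \cite{andrews2005ramanujan}), specialised at $a=q^{2k}$ or similar, and then pair consecutive terms of the resulting alternating $q$-series so that, after dividing by the quintuple product, every grouped term is manifestly of one sign. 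If you want to complete your argument you would need to replace the triple-product splitting by such an identity for the tail, or else supply the missing positivity analysis for the ratio of products together with a mechanism that rules out cancellation between the two halves.
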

\begin{thm}[\cite{chan2016truncated}]\label{chan-truncated -ktok-1}
	For $1\leq S < R/2$ and $k\geq 1$ , the truncated series
	\begin{equation}\label{tru-CHM-qu2}
		-1+\frac{1}{(q^{-S},q^{R+S},q^R;q^R)_{\infty}(q^{R-2S},q^{R+2S};q^{2R})_{\infty}}\sum_{n=-k}^{k-1}q^{n(3n+1)R/2}(q^{3nS}-q^{-(3n+1)S})
	\end{equation}
	has non-positive coefficients.
\end{thm}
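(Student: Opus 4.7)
The plan is to rewrite $N_k$ as a tail quotient. Setting $D(q)=(q^{-S},q^{R+S},q^R;q^R)_\infty(q^{R-2S},q^{R+2S};q^{2R})_\infty$ and using \eqref{eq-qui-trans} to replace $-1$ by $-D(q)/D(q)$, we obtain
\[
N_k=-\frac{1}{D(q)}\Bigl(\sum_{n\leq -k-1}+\sum_{n\geq k}\Bigr)q^{n(3n+1)R/2}(q^{3nS}-q^{-(3n+1)S}),
\]
so it suffices to show that this tail divided by $D(q)$ has non-negative coefficients.

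Next, I reindex the $n\leq -k-1$ half via $n=-m-1$ (so $m\geq k$), obtaining
\[
\sum_{n\leq -k-1}q^{n(3n+1)R/2}(q^{3nS}-q^{-(3n+1)S})=\sum_{m\geq k}q^{(m+1)(3m+2)R/2}\bigl(q^{-(3m+3)S}-q^{(3m+2)S}\bigr).
\]
Combined with the $n\geq k$ half and after factoring the lowest $q$-power from each paired contribution, the tail becomes
\[
\sum_{m\geq k}\Bigl[q^{m(3m+1)R/2-3mS}(1-q^{(6m+1)S})-q^{(m+1)(3m+2)R/2-(3m+2)S}(1-q^{(6m+5)S})\Bigr].
\]

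The core of the proof would mirror the strategy Chan, Ho and Mao use for Theorem~\ref{chan2016truncated}. I would apply the Jacobi triple product identity separately to the two theta-function factors comprising $D(q)$, obtaining
\[
(q^{-S},q^{R+S},q^R;q^R)_\infty=\sum_{j}(-1)^j q^{j(j-1)R/2-jS},\qquad (q^{R-2S},q^{R+2S},q^{2R};q^{2R})_\infty=\sum_{\ell}(-1)^\ell q^{\ell^2 R-2\ell S},
\]
and use these expansions to express $1/D(q)$ multiplied by the tail as a weighted sum of inverse Euler products $1/(q^a;q^b)_\infty$. Each such inverse Euler product is a partition generating function with non-negative coefficients, so non-negativity of the overall series reduces to non-negativity of the resulting weights, which one verifies by combinatorial bookkeeping.

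The main obstacle is handling the alternating signs inside each paired tail contribution: each pair has the form $+q^{E_1}(1-q^{f_1 S})-q^{E_2}(1-q^{f_2 S})$, and after multiplication by $1/D(q)$ the negative summands must be absorbed by positive contributions from elsewhere. The crucial step is to exhibit an explicit cancellation mechanism—whether via telescoping between indices $m$ and $m+1$, or via the theta-function expansions of $1/D(q)$. As a back-up approach, one may attempt induction on $k$ starting from the identity
\[
N_k=(P_{k-1}-1)+\frac{q^{k(3k-1)R/2}(q^{-3kS}-q^{(3k-1)S})}{D(q)},
\]
where $P_{k-1}$ denotes the non-negative expression of Theorem~\ref{chan2016truncated} at parameter $k-1$; the base case $k=1$ is a direct check, while the inductive step reduces to a careful sign analysis of the single-term correction against $1-P_{k-1}$.
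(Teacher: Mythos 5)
First, a point of reference: the paper does not prove this statement at all. Theorem \ref{chan-truncated -ktok-1} is quoted from Chan, Ho and Mao \cite{chan2016truncated} and used as a black box, so there is no internal proof to compare against; the closest things in the paper are the analogous inequalities proved in Lemma \ref{lem3n+1-6} and Lemma \ref{lem6n+1-5}. Judged on its own terms, your proposal has a genuine gap. The opening reduction is fine: replacing $-1$ by the full bilateral sum via \eqref{eq-qui-trans} correctly converts the problem into showing that the two tails $\sum_{n\le -k-1}+\sum_{n\ge k}$, divided by the quintuple product $D(q)$, have non-negative coefficients, and your reindexing $n=-m-1$ is computed correctly. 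But everything after that is a statement of intent rather than an argument. The claim that expanding the theta factors of $D(q)$ by the Jacobi triple product lets you write the tail quotient as ``a weighted sum of inverse Euler products'' verifiable by ``combinatorial bookkeeping'' is unsubstantiated and not even mechanically plausible as stated: dividing by a theta function (an alternating series) does not produce a sum of inverse Euler products, and the second expansion you write down is for $(q^{R-2S},q^{R+2S},q^{2R};q^{2R})_\infty$, which carries an extra factor $(q^{2R};q^{2R})_\infty$ not present in $D(q)$. You yourself identify the ``explicit cancellation mechanism'' for the alternating signs as the crucial step and then leave it entirely open; that step is the whole theorem.

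The back-up induction is also vacuous. Writing $N_k=(P_{k-1}-1)+(\text{one correction term})/D(q)$ with $P_{k-1}$ coefficientwise non-negative gives no leverage: $P_{k-1}-1$ has non-negative coefficients, so you would need the single correction term to dominate it coefficientwise, which is exactly the nontrivial positivity assertion in disguise, and nothing in the inductive hypothesis addresses it. For what a complete argument of this shape looks like, see how the paper treats the parallel statements: in Lemmas \ref{lem3n+1-1}--\ref{lem3n+1-5} and \ref{lem6n+1-1}--\ref{lem6n+1-c1} the tails are converted into alternating basic hypergeometric sums via differentiated Ramanujan partial-theta identities (Entries 9.4.1 and 9.5.1 of the Lost Notebook), consecutive terms are paired, and each paired block over the infinite product is exhibited explicitly as a product of series with non-negative coefficients. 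Your sketch stops well short of supplying any such mechanism, so as it stands it does not establish the theorem.
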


Let
\begin{equation}\notag
	\Sg(x):=
	\begin{cases}
		1, & \text{if }x \geq 0\\
		-1, & \text{if }x < 0
	\end{cases}.
\end{equation}
The first main result of this paper is to give a bilateral generalization of both Theorem \ref{chan2016truncated} and Theorem \ref{chan-truncated -ktok-1} as follows.

\begin{thm}\label{thm-2}
	For any	 $a,b\in \mathbb{Z}$ and positive integers $S$,$R$ with $1\leq S < R/2$,
	\begin{equation}\label{thm2-eq}
		\frac{\Sg(a+b)}{(q^{-S},q^{R+S},q^R;q^R)_{\infty}(q^{R-2S},q^{R+2S};q^{2R})_{\infty}}\sum_{n=a}^{b}q^{n(3n+1)R/2}(q^{3nS}-q^{-(3n+1)S})
	\end{equation}
	has non-negative coefficients of $q^k$ for any $k\ge 1$.
\end{thm}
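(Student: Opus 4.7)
The plan is to prove Theorem~\ref{thm-2} by strong induction on $b-a$, using Theorems~\ref{chan2016truncated} and \ref{chan-truncated -ktok-1} to anchor the symmetric cases $a+b=0$ and $a+b=-1$, and a single-term sign lemma to handle the extension to general $a,b$.

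The key lemma I would prove first is the following. Writing $f(n) := q^{n(3n+1)R/2}(q^{3nS}-q^{-(3n+1)S})$ and $\mathrm{QP}$ for the right-hand side of \eqref{eq-qui-trans}, the quotient $f(n)/\mathrm{QP}$ has non-negative coefficients when $n \ge 0$ and non-positive coefficients when $n \le -1$. To prove this I would factor $f(n) = -q^{(3n+1)(nR/2-S)}(1-q^{(6n+1)S})$ and pull out the unique negative-power factor $(1-q^{-S})$ of $\mathrm{QP}$ by writing $\mathrm{QP} = (1-q^{-S})\,\mathrm{QP}'$, where
\[ \mathrm{QP}' = (q^{R-S}, q^{R+S}, q^R; q^R)_{\infty}(q^{R-2S}, q^{R+2S}; q^{2R})_{\infty} \]
is a product in strictly positive $q$-powers (using $S<R/2$), so $1/\mathrm{QP}'$ is an ordinary partition generating function. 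The crucial simplification is that $(1-q^{(6n+1)S})/(1-q^{-S})$ reduces to a finite signed geometric polynomial whose sign is dictated by the sign of $6n+1$: for $n\ge 1$ it equals $-\sum_{i=1}^{6n+1}q^{iS}$, and for $n=-m\le -1$ it equals $\sum_{i=0}^{6m-2}q^{-iS}$. Combined with the prefactor, the resulting series has the claimed sign throughout, and (when $n\ne 0$) is supported on strictly positive $q$-powers; for $n=0$ one obtains the clean identity $f(0)/\mathrm{QP}=1/\mathrm{QP}'$.

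With the lemma in hand I perform strong induction on $b-a$. The base cases are: $b<a$ (empty sum, trivial); $a=b$ (a single $f(a)/\mathrm{QP}$ whose sign under $\Sg(2a)$ matches the lemma); $a+b=0$ with $a<b$, where Theorem~\ref{chan2016truncated} with $k=b$ applies directly; and $a+b=-1$ with $a<b$, where Theorem~\ref{chan-truncated -ktok-1} with $k=-a$ applies and restricting the coefficient-wise inequality to indices $k\ge 1$ yields exactly the desired bound. For the inductive step with $a<b$ and $a+b\notin\{-1,0\}$, I split on the sign of $a+b$. When $a+b\ge 1$ (forcing $b\ge 1$), I write $T(a,b)=T(a,b-1)+f(b)$: the inductive hypothesis (or the relevant base case) applied to $T(a,b-1)$, whose length is strictly smaller and whose endpoints satisfy $a+(b-1)\ge 0$, gives non-negativity of $[q^k]T(a,b-1)/\mathrm{QP}$ for $k\ge 1$, while the lemma with $b\ge 1$ ensures $f(b)/\mathrm{QP}$ is non-negative throughout, so the sum has the required sign for $k\ge 1$. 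The case $a+b\le -2$ is handled symmetrically by peeling off $f(a)$ on the left, using $a\le -1$ to invoke the non-positive half of the lemma.

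The main obstacle is the sign lemma in the first step; once it is established, the induction is clean bookkeeping that combines Theorems~\ref{chan2016truncated} and \ref{chan-truncated -ktok-1} with the pointwise lemma. The delicate point in the lemma is that the negative-power factor $(1-q^{-S})$ of $\mathrm{QP}$ must cancel precisely against the $(1-q^{(6n+1)S})$ factor of $f(n)$, leaving only positive $q$-powers with a sign governed solely by $6n+1$; this cancellation is where the hypothesis $S<R/2$ plays its essential role.
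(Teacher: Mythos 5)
Your proposal is correct and is essentially the paper's argument: the same termwise sign computation for $f(n)/\mathrm{QP}$ (the paper's \eqref{fracsga+b} and \eqref{fracsga+b1}), combined with Theorems \ref{chan2016truncated} and \ref{chan-truncated -ktok-1} for the symmetric and almost-symmetric sums. The only difference is organizational — you phrase the final combination as an induction on $b-a$ peeling off endpoint terms, whereas the paper packages it as an abstract five-case lemma (Lemma \ref{lemqn}) about sequences satisfying the sign and partial-sum conditions — and these are logically equivalent.
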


The second goal of this paper is to provide bilateral truncated generalizations of the following two identities, which are well-known as the consequences  of the quintuple product identity   (see \cite{ramanujan1916on,watson1931ramanujan}).
\begin{equation}\label{quituple3n+1} \sum_{n=-\infty}^{\infty}(3n+1)q^{3n^2+2n}=(q;q^2)_{\infty}^{2}(q^2;q^2)_{\infty}(q^4;q^4)_{\infty}^2,
\end{equation}
\begin{equation}\label{quituple6n+1}
	\sum_{n=-\infty}^{\infty}(6n+1)q^{3n^2+n}=(q^2;q^2)_{\infty}^{3}(q^2;q^4)_{\infty}^{2}.
\end{equation}
Chan, Ho and Mao \cite{chan2016truncated} also found the truncated forms of the above two identities as follows.

\begin{thm}[\cite{chan2016truncated}]\label{chan2016truncated-1}
	For $k\geq 0$, the truncated series
	\begin{equation}\label{chan-truncated-(3n+1)}
		\frac{1}{(q;q^2)_{\infty}^2(q^2;q^2)_{\infty}(q^4;q^4)_{\infty}^2}\sum_{n=-k}^{k}(3n+1)q^{3n^2+2n}
	\end{equation}
	has non-negative coefficients.
\end{thm}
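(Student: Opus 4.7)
The plan is to deduce Theorem~\ref{chan2016truncated-1} from the quintuple product identity \eqref{cooper-quituple-product} together with the Chan--Ho--Mao truncated quintuple product (Theorem~\ref{chan2016truncated}) through a limiting argument at $y=1$, after the specialization $q\mapsto q^2$, $x\mapsto q^{-1}y$. The idea is first to rederive the full identity \eqref{quituple3n+1} in this way, and then to carry the same argument through to the truncated sum.

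Under the specialization the right-hand side of \eqref{cooper-quituple-product} becomes $(qy^{-1},qy,q^2;q^2)_\infty(q^4y^{-2};q^4)_\infty(y^2;q^4)_\infty$, which vanishes at $y=1$ through the factor $1-y^2$ inside $(y^2;q^4)_\infty$. The corresponding left-hand side $\sum_n\bigl(q^{3n^2-2n}y^{3n}-q^{3n^2+4n+1}y^{-3n-1}\bigr)$ also vanishes at $y=1$, as one sees by the index shifts $n\mapsto-n$ in the first series and $n\mapsto-n-1$ in the second. Dividing both sides by $1-y^2$ and evaluating at $y=1$ via L'H\^opital produces the coefficient $3n+1$ on the left and $(q;q^2)_\infty^2(q^2;q^2)_\infty(q^4;q^4)_\infty^2$ on the right, recovering \eqref{quituple3n+1}. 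At the truncated level the same specialization produces
\[
F_k(y,q):=\sum_{n=-k}^{k}\bigl(q^{3n^2-2n}y^{3n}-q^{3n^2+4n+1}y^{-3n-1}\bigr),
\]
which no longer vanishes at $y=1$: a direct telescoping gives $F_k(1,q)=q^{3k^2+2k}(1-q^{2k+1})$, because the two index shifts now leave uncancelled endpoint terms inside the range of summation.

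The plan for the truncated case is to apply the divide-by-$(1-y^2)$-then-evaluate-at-$y=1$ procedure to $F_k(y,q)/G(y,q)$, where $G(y,q)$ denotes the specialized infinite product. I would decompose $F_k/G=(F_k-E_k)/G+E_k/G$, choosing a simple polynomial $E_k(y,q)$ so that $F_k-E_k$ vanishes at $y=\pm1$; the first quotient then yields the truncated sum in \eqref{chan-truncated-(3n+1)} divided by the product appearing in Theorem~\ref{chan2016truncated-1}, while the second yields an explicit boundary ratio of the form $q^{3k^2+2k}(1-q^{2k+1})\big/[(q;q^2)_\infty^2(q^2;q^2)_\infty(q^4;q^4)_\infty^2]$. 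Non-negativity of the first quotient is obtained from Theorem~\ref{chan2016truncated} under the matching specialization; non-negativity of the boundary ratio must be verified directly.

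The principal obstacle is that the substitution $x=q^{-1}y$ corresponds formally to the parameters $(R,S)=(2,-1)$, which lie outside the admissibility range $1\le S<R/2$ of Theorem~\ref{chan2016truncated}; to handle this I would either run the argument with admissible parameters and take a formal limit in $q$ that preserves coefficient-non-negativity, or reprove Theorem~\ref{chan2016truncated} at the needed specialization along the combinatorial lines of Chan--Ho--Mao. A secondary issue is the non-negativity of the explicit boundary ratio, which reduces to showing that the coefficient sequence of $[(q;q^2)_\infty^2(q^2;q^2)_\infty(q^4;q^4)_\infty^2]^{-1}$ is weakly increasing under shifts of $2k+1$; this can be argued by an explicit injection on the associated $5$-tuples of restricted partitions. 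Keeping track of the boundary correction while transferring non-negativity from Theorem~\ref{chan2016truncated} is the most delicate step, and is where the bookkeeping of the proof will have to be carried out carefully.
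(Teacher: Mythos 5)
First, a point of orientation: the paper does not prove Theorem~\ref{chan2016truncated-1} at all; it is imported verbatim from Chan--Ho--Mao \cite{chan2016truncated} and used as an input to Theorem~\ref{thm-3n+1}. The closest argument actually carried out in the paper is the proof of the companion inequality in Lemma~\ref{lem3n+1-6}, which proceeds by isolating the two tails of \eqref{quituple3n+1}, converting them via Ramanujan's identity \eqref{raidentity3n+1} (Entry 9.5.1 of the Lost Notebook) into an alternating series, pairing consecutive terms, and exhibiting each resulting piece ($A_1,A_2,A_3,B$) as manifestly signed. That is the style of argument your proof would ultimately need.

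Your proposal has a genuine gap at its central step, and you have in fact located it yourself. Your preliminary computations are correct: the specialization $q\mapsto q^2$, $x\mapsto q^{-1}y$ does recover \eqref{quituple3n+1} after dividing by $1-y^2$ and letting $y\to1$, and $F_k(1,q)=q^{3k^2+2k}(1-q^{2k+1})$ is right. But the transfer of non-negativity from Theorem~\ref{chan2016truncated} cannot be made to work as described, for two independent reasons. First, Theorem~\ref{chan2016truncated} is a family of one-variable statements indexed by the discrete set of integer pairs $(R,S)$ with $1\le S<R/2$; your target point is $(R,S)=(2,-1)$, and there is no path inside that discrete admissible set along which one can take a limit, let alone compute a $y$-derivative at $y=1$. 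Second, and more fundamentally, even if you had joint non-negativity of the coefficients $c_m(q)$ of $y^m$ in $F_k(y,q)/G(y,q)$, the L'H\^opital step evaluates $-\tfrac12\sum_m m\,c_m(q)$, and the weights $m$ take both signs (the exponents of $y$ run over $3n$ and $-3n-1$ for $-k\le n\le k$), so positivity of the $c_m$ gives no control over the sign of the derivative. The multiplier $3n+1$ in \eqref{chan-truncated-(3n+1)} is born from differentiation and is not a specialization of the quintuple product at any admissible point, which is precisely why Chan--Ho--Mao (and this paper, for the adjacent Lemma~\ref{lem3n+1-6}) prove these weighted truncations by a separate argument through \eqref{raidentity3n+1} rather than by reduction to Theorem~\ref{chan2016truncated}. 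Your fallback option of ``reproving Theorem~\ref{chan2016truncated} at the needed specialization'' is therefore not a repair of the reduction but an acknowledgement that an independent proof is required; as written, the proposal does not contain one. (Your secondary worry about the boundary ratio is, by contrast, harmless: since $2k+1$ is odd, $(1-q^{2k+1})/(q;q^2)_\infty=1/\bigl[(q;q^2)_k(q^{2k+3};q^2)_\infty\bigr]$ already has non-negative coefficients, so no injection argument is needed there.)
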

\begin{thm}[\cite{chan2016truncated}]\label{thmchan2016truncated6n+1}
	For $k\geq 0$, the truncated series
	\begin{equation}\label{tru-CHM-qu3}
		\frac{1}{(q^2;q^2)_{\infty}^{3}(q^2;q^4)_{\infty}^{2}}\sum_{n=-k}^{k}(6n+1)q^{3n^2+n}
	\end{equation}
	has non-negative coefficients.
\end{thm}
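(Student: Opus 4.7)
The plan is to treat Theorem~\ref{thmchan2016truncated6n+1} as the ``differentiate at $x=1$'' analogue of Theorem~\ref{chan2016truncated}, then prove the resulting tail positivity either by adapting the Chan--Ho--Mao argument to the boundary value $R=2$ or by producing an explicit non-negative double-sum representation. Substituting $q\mapsto q^{2}$ in the quintuple product identity~\eqref{cooper-quituple-product} and peeling off $(x^{-1};q^{2})_{\infty}=(1-x^{-1})(x^{-1}q^{2};q^{2})_{\infty}$ gives
\[
x\,\Theta(x;q)=(x-1)\,\Psi(x;q),
\]
where $\Theta(x;q):=\sum_{n\in\mathbb{Z}}q^{n(3n+1)}(x^{3n}-x^{-3n-1})$ and $\Psi(x;q):=(x^{-1}q^{2},xq^{2},q^{2};q^{2})_{\infty}(x^{-2}q^{2},x^{2}q^{2};q^{4})_{\infty}$ satisfies $\Psi(1;q)=(q^{2};q^{2})_{\infty}^{3}(q^{2};q^{4})_{\infty}^{2}$. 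Differentiating the displayed identity at $x=1$ recovers~\eqref{quituple6n+1}.

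I would apply the same reshaping to the truncation $\Theta_{k}(x;q):=\sum_{n=-k}^{k}q^{n(3n+1)}(x^{3n}-x^{-3n-1})$. Each summand vanishes at $x=1$, so $R_{k}(x;q):=x\Theta_{k}(x;q)/(x-1)$ is a Laurent polynomial in $x$ over $\mathbb{Z}[[q]]$ with
\[
R_{k}(1;q)=\left.\frac{d}{dx}\bigl[x\Theta_{k}(x;q)\bigr]\right|_{x=1}=\sum_{n=-k}^{k}(6n+1)q^{3n^{2}+n}.
\]
From $x\Theta_{k}=(x-1)R_{k}$ and $x\Theta=(x-1)\Psi$ one has $R_{k}/\Psi=\Theta_{k}/\Theta$ identically, so the quotient in Theorem~\ref{thmchan2016truncated6n+1} is $R_{k}(1;q)/\Psi(1;q)$. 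Writing it as $1-[\Psi(1;q)-R_{k}(1;q)]/\Psi(1;q)$ and pairing $n\leftrightarrow-n$ in the tail reduces the theorem to showing that
\[
\frac{1}{(q^{2};q^{2})_{\infty}^{3}(q^{2};q^{4})_{\infty}^{2}}\sum_{n\ge k+1}\bigl[(6n+1)q^{3n^{2}+n}-(6n-1)q^{3n^{2}-n}\bigr]
\]
has non-positive coefficient of $q^{j}$ for every $j\ge 1$.

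Proving this last inequality is the main obstacle. Theorem~\ref{chan2016truncated} covers $1\le S<R/2$, so the specialization that would correspond to the $(6n+1)$-exponent, namely $R=2$ with $S\to 0$, sits exactly on the excluded boundary and cannot be invoked directly. I would first attempt to rerun the Chan--Ho--Mao proof of Theorem~\ref{chan2016truncated} with $R=2$ and $S$ kept as a formal parameter: both the truncated numerator and the factor $(q^{-S};q^{2})_{\infty}$ in the denominator vanish to first order in $S$, and after cancelling this common $(1-q^{-S})$-factor the $O(S)$ coefficient of their central identity becomes a purely $q$-series statement whose positivity would need to be read off from their building blocks (Jacobi-triple dissections and $q$-Gauss-type summations). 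If this ``first derivative'' extraction proves delicate, the fall-back is to produce an explicit non-negative double-sum representation for $R_{k}(1;q)/\Psi(1;q)$ via a Bailey pair whose companion theta sum collapses to $\sum_{|n|\le k}(6n+1)q^{3n^{2}+n}$, paralleling the proof of the sibling Theorem~\ref{chan2016truncated-1}; this route is more combinatorial and, in my view, the cleaner of the two.
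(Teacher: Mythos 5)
First, note that this statement is quoted in the paper from Chan, Ho and Mao \cite{chan2016truncated} and is not proved here; the closest thing to a proof in this paper is the machinery of Section \ref{sectionthm3} (Lemmas \ref{lem6n+1-1}--\ref{lem6n+1-c1}), which handles the sibling truncation $\sum_{n=-k}^{k-1}$ by exactly the method your proposal would need. Your setup is correct as far as it goes: the factorization $x\Theta(x;q)=(x-1)\Psi(x;q)$, the recovery of \eqref{quituple6n+1} by differentiating at $x=1$, and the reduction of the theorem to showing that
\[
\frac{1}{(q^{2};q^{2})_{\infty}^{3}(q^{2};q^{4})_{\infty}^{2}}\sum_{n\ge k+1}\bigl[(6n+1)q^{3n^{2}+n}-(6n-1)q^{3n^{2}-n}\bigr]
\]
has non-positive coefficients for $j\ge 1$ are all sound and standard (this is the same ``subtract the tails'' step as \eqref{(3n+1)-to-k-1kto+} and \eqref{eq-quint-1.11-1}).

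The genuine gap is that you stop precisely where the proof begins: the non-positivity of the tail is the entire content of the theorem, and neither of your two proposed routes is carried out. Route (a) does not work as described: Theorem \ref{chan2016truncated} is a family of coefficientwise inequalities indexed by \emph{integers} $S\ge 1$, and non-negativity for each fixed $S$ gives no information about the $O(S)$ coefficient of a formal expansion at $S=0$ (a derivative of a family of non-negative series need not be non-negative), so one cannot ``read off'' the desired positivity from their building blocks; the whole argument would have to be redone from scratch. Route (b) is the correct idea --- it is what Chan--Ho--Mao actually do, and what this paper does for Lemma \ref{lem6n+1-5}: one takes Ramanujan's identity \cite[Entry 9.4.1]{andrews2005ramanujan}, differentiates with respect to $a$, specializes $a=q^{2k}$ (after $q\mapsto q^{2}$), pairs consecutive terms of the resulting alternating sum, and verifies term-by-term non-negativity of each resulting block (as in the decompositions $C=-2(C_1+C_2+C_3+C_4)$ and the factorizations \eqref{661}--\eqref{663}). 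Naming a Bailey-pair strategy is not the same as producing the pairing and the positivity of each piece, which is where all the delicacy lies (note, e.g., the careful isolation of boundary terms such as $A_3$ and $C_3$ in the paper's analogous lemmas). As submitted, the proposal is a correct reduction plus a research plan, not a proof.
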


We find the following bilateral truncated form  of Theorem \ref{chan2016truncated-1}.

\begin{thm}\label{thm-3n+1}
	For $a,b\in \mathbb{Z}$, the truncated series
	\begin{equation}\label{3n+1atobmainthm} \frac{\Sg(a+b)}{(q;q^2)_{\infty}^2(q^2;q^2)_{\infty}(q^4;q^4)_{\infty}^2}\sum_{n=a}^{b}(3n+1)q^{3n^2+2n}
	\end{equation}
	has non-negative coefficients of $q^k$ for any $k\ge 1$.
\end{thm}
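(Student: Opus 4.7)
The approach is to combine Theorem~\ref{chan2016truncated-1} with a case analysis on the signs of $a$, $b$, and $a+b$, using the elementary observation that
\[
\frac{1}{P}:=\frac{1}{(q;q^2)_{\infty}^2(q^2;q^2)_{\infty}(q^4;q^4)_{\infty}^2}
\]
has non-negative power-series coefficients, being a product of factors of the form $(1-q^j)^{-m_j}$ with $m_j\ge 1$.

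For $\Sg(a+b)=1$ I split into two subcases. If $a\ge 0$, then every $(3n+1)$ with $n\in[a,b]$ is positive, so the truncated sum is a polynomial with non-negative coefficients and its product with the non-negative series $1/P$ is again non-negative. If $a\le -1$, then $a+b\ge 0$ forces $b\ge -a$, and I would decompose
\[
\sum_{n=a}^{b}(3n+1)q^{3n^2+2n}=\sum_{n=-|a|}^{|a|}(3n+1)q^{3n^2+2n}+\sum_{n=|a|+1}^{b}(3n+1)q^{3n^2+2n};
\]
the first (symmetric) sum is handled by Theorem~\ref{chan2016truncated-1} with $k=|a|$, and the second reduces to the case $a\ge 0$ just treated.

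For $\Sg(a+b)=-1$ (so $a+b\le -1$, hence $a\le -1$), the subcase $b\le -1$ is immediate: every $(3n+1)$ with $n\le -1$ is negative, the sum is a non-positive polynomial, and after division by $P$ and multiplication by $\Sg(a+b)=-1$ the coefficients become non-negative. The remaining subcase $a\le -1$, $b\ge 0$, $a+b\le -1$ is the crux. Here I use the involution $n\leftrightarrow -n-1$, under which \eqref{quituple3n+1} assumes the paired form $P=\sum_{n\ge 0}\bigl[(3n+1)q^{3n^2+2n}-(3n+2)q^{3n^2+4n+1}\bigr]$, and split $[a,b]=[a,-b-2]\cup[-b-1,b]$. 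The second piece rewrites as $\sum_{n=0}^{b}\bigl[(3n+1)q^{3n^2+2n}-(3n+2)q^{3n^2+4n+1}\bigr]$, a one-sided paired truncation; the first piece, after the involution, becomes $-\sum_{m=b+1}^{-a-1}(3m+2)q^{3m^2+4m+1}$ with all $m\ge 1$, which is a non-positive polynomial whose quotient by $P$ is therefore non-positive.

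The main obstacle is to show that the one-sided paired truncation
\[
\frac{1}{P}\sum_{n=0}^{b}\bigl[(3n+1)q^{3n^2+2n}-(3n+2)q^{3n^2+4n+1}\bigr]
\]
has non-positive coefficients of $q^k$ for $k\ge 1$, the analog of Theorem~\ref{chan-truncated -ktok-1} for the identity \eqref{quituple3n+1}. Once this lemma is in hand, the crux subcase follows by adding the two non-positive contributions identified above, and the theorem is proved. I would attack the lemma by rewriting the paired truncation as $\sum_{n=-(b+1)}^{b}(3n+1)q^{3n^2+2n}$ and adapting the method used to establish Theorem~\ref{chan2016truncated-1} in \cite{chan2016truncated}, for instance through a partition-theoretic bijection or through a boundary-correction argument that compares this asymmetric truncation against the symmetric one $\sum_{n=-(b+1)}^{b+1}(3n+1)q^{3n^2+2n}$.
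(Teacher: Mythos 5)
Your reduction of the theorem to a sign/case analysis is correct and is in fact exactly the paper's strategy: the paper packages your four cases (plus the empty case $a>b$) into an abstract statement (Lemma \ref{lemqn}) whose hypotheses are (i) $\Sg(n)f(n)\ge 0$ termwise, (ii) $\sum_{n=-k}^{k}f(n)\ge 0$ and $\sum_{n=-k}^{k-1}f(n)\le 0$, and its proof uses precisely your decompositions $[a,b]=[a,-a]\cup[-a+1,b]$ and $[a,b]=[a,-b-2]\cup[-b-1,b]$. Your identification of the crux is also correct: everything hinges on the one-sided paired truncation, i.e.\ on showing that $\frac{1}{P}\sum_{n=-(b+1)}^{b}(3n+1)q^{3n^2+2n}$ has non-positive coefficients of $q^k$ for $k\ge 1$.

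However, you do not prove that lemma, and it is the substantial content of the paper's Section \ref{sectionthm-3n+1} (Lemma \ref{lem3n+1-6}, established through Lemmas \ref{lem3n+1-1}--\ref{lem3n+1-5}). The paper proves it by subtracting the truncation from the full bilateral series \eqref{quituple3n+1}, reindexing the two tails, and splitting the result into pieces $A_1,A_2,A_3,B$ whose signs are controlled via a differentiated form of Ramanujan's identity $\sum_{n\ge 0}\frac{(-1)^na^{2n}q^{n(n+1)}}{(-aq;q^2)_{n+1}}=\sum_{n\ge 0}a^{3n}q^{3n^2+2n}(1-aq^{2n+1})$ (Entry 9.5.1 of the Lost Notebook) specialized at $a=q^{2k}$ --- a genuinely new computation, not a formal consequence of Theorem \ref{chan2016truncated-1}. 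Your proposed fallback, comparing the asymmetric truncation $\sum_{n=-(b+1)}^{b}$ against the symmetric one $\sum_{n=-(b+1)}^{b+1}$, gives an inequality in the wrong direction: their difference is $-\frac{(3b+4)q^{3(b+1)^2+2(b+1)}}{P}$, which is non-positive, so you only learn that the asymmetric truncation is coefficientwise at most a non-negative series --- this does not establish non-positivity. So the argument as written has a genuine gap at its central step, and closing it requires an independent analytic input of the kind the paper supplies.
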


Similarly, the following theorem is a  bilateral generalization of Theorem \ref{thmchan2016truncated6n+1}.
\begin{thm}\label{thm-3}
	For $a,b\in \mathbb{Z}$, the truncated series
	\begin{equation}\label{eqatob}  		\frac{\Sg(a+b)}{(q^2;q^2)_{\infty}^{3}(q^2;q^4)_{\infty}^{2}}\sum_{n=a}^{b}(6n+1)q^{3n^2+n}
	\end{equation}
	has non-negative coefficients of $q^k$ for any $k\ge 1$.
\end{thm}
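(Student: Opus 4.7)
The plan is a case analysis on the positions of $a,b$ relative to $0$ and the sign of $a+b$, reducing the bilateral statement to the symmetric truncation given by Theorem~\ref{thmchan2016truncated6n+1} together with one asymmetric border case. Throughout write $D=(q^2;q^2)_\infty^3(q^2;q^4)_\infty^2$ and $f(a,b)=\sum_{n=a}^{b}(6n+1)q^{3n^2+n}$, and use that $1/D$ has non-negative coefficients (as a product of partition generating functions), so multiplication by $1/D$ preserves the sign pattern of coefficients. Three cases are immediate: if $a\ge 0$, every term of $f(a,b)$ is non-negative; if $b<0$, every term is non-positive and $\Sg(a+b)=-1$; and if $a<0\le b$ with $a+b\ge 0$, the decomposition $f(a,b)=f(-b,b)-f(-b,a-1)$ writes $f(a,b)/D$ as the sum of the non-negative series $f(-b,b)/D$ (Theorem~\ref{thmchan2016truncated6n+1}) and the non-negative series $-f(-b,a-1)/D$, since $[-b,a-1]\subseteq(-\infty,-1]$ forces $-f(-b,a-1)$ to be a non-negative polynomial.

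The essential case is $a<0\le b$ with $a+b<0$, hence $a\le -b-1$. Splitting $f(a,b)=f(a,-b-1)+f(-b,b)$, the target inequality becomes
\[
\frac{-f(a,-b-1)-f(-b,b)}{D}\ge 0 \quad \text{at every }q^k,\ k\ge 1.
\]
Since $-f(a,-b-1)$ is a non-negative polynomial (all indices in $[a,-b-1]$ are strictly negative), I would induct on $|a|$ for fixed $b\ge 0$: passing from $a$ to $a-1$ adds to $-f(a,-b-1)$ the extra monomial $-(6(a-1)+1)q^{(a-1)(3a-2)}$, which is non-negative because $a-1\le -1$, so the inequality propagates. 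Everything reduces to the base case $a=-(b+1)$, where $-f(-(b+1),-b-1)=(6b+5)q^{3b^2+5b+2}$ and the required inequality becomes
\[
\frac{-f(-(b+1),b)}{D}=\frac{(6b+5)q^{3b^2+5b+2}-f(-b,b)}{D}\ge 0 \quad \text{for }k\ge 1.
\]

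The base case is the main obstacle. It is the natural asymmetric companion of Theorem~\ref{thmchan2016truncated6n+1}, playing the role for $(6n+1)q^{3n^2+n}$ that Theorem~\ref{chan-truncated -ktok-1} plays for \eqref{eq-qui-trans}. I plan to derive it from Theorem~\ref{chan-truncated -ktok-1} by an $S\to 0^+$ limit: with $R=2$ and truncation parameter $k=b+1$, both the shifted truncation $\sum_{n=-(b+1)}^{b}q^{n(3n+1)}(q^{3nS}-q^{-(3n+1)S})$ and the infinite-product denominator $(q^{-S},q^{2+S},q^2;q^2)_\infty(q^{2-2S},q^{2+2S};q^4)_\infty$ vanish to first order at $S=0$ via the common factor $1-q^{-S}$, and L'H\^opital at $S=0$ produces the ratio $f(-(b+1),b)/D$, so the coefficient-wise non-positivity for $k\ge 1$ guaranteed by Theorem~\ref{chan-truncated -ktok-1} carries through the limit term by term. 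The principal technical hurdle is checking that the proof of Theorem~\ref{chan-truncated -ktok-1} extends from integer $S$ to real $S\in(0,R/2)$; once that is verified, the induction closes and Theorem~\ref{thm-3} follows.
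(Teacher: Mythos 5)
Your reduction is sound and, in substance, coincides with the paper's Lemma \ref{lemqn}: the case analysis on the positions of $a,b$ shows that the bilateral statement follows from (i) the sign of the individual terms $(6n+1)q^{3n^2+n}$, (ii) the symmetric truncation $\sum_{n=-k}^{k}$ (Theorem \ref{thmchan2016truncated6n+1}), and (iii) the asymmetric truncation, namely that
\[
\frac{1}{(q^2;q^2)_{\infty}^{3}(q^2;q^4)_{\infty}^{2}}\sum_{n=-k}^{k-1}(6n+1)q^{3n^2+n}
\]
has non-positive coefficients of $q^i$ for every $i\ge 1$. Your ``base case'' with $k=b+1$ is exactly this statement; it is the paper's Lemma \ref{lem6n+1-5}, and it is where all the real work lies.

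Your proposed proof of that base case has a genuine gap. You want to obtain it from Theorem \ref{chan-truncated -ktok-1} with $R=2$ by letting $S\to 0^+$. But the hypothesis of Theorem \ref{chan-truncated -ktok-1} is $1\le S<R/2$ with $S,R$ positive integers, and for $R=2$ this range is empty: the theorem asserts nothing in the regime you need, so there is no statement to pass to the limit. Rescaling $q\mapsto q^m$ so that $S/R$ becomes small does not help, since the base of the $q$-products then changes with $m$ and the limit does not converge to the fixed series above. To carry out your plan you would have to prove a new, strictly stronger result --- either a version of Theorem \ref{chan-truncated -ktok-1} for real (or independent two-variable) $S$, where ``non-positive coefficients'' no longer refers to a single power series in $q$, or a direct justification that the coefficientwise inequality survives the $0/0$ limit created by the common factor $1-q^{-S}$ in numerator and denominator, across which individual coefficients recombine. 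Neither is routine: Chan, Ho and Mao proved Theorem \ref{thmchan2016truncated6n+1} by a separate argument rather than as a limit of \eqref{tru-CHM-qu1}, and the present paper devotes essentially all of Section \ref{sectionthm3} (Lemmas \ref{lem6n+1-1}--\ref{lem6n+1-c1}, built on Ramanujan's Entry 9.4.1, differentiation in $a$, and the explicit positivity decomposition $-C=2(C_1+C_2+C_3+C_4)$ together with the $D$-series) to establishing exactly your base case directly. As written, your argument is complete only modulo that unproved lemma.
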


Recall that the overpartition of $n$ was defined by Corteel and Lovejoy \cite{corteel2004overpartitions} as a non-increasing sequence of natural number   in which the first occurrence of a number maybe overlined. Let $\overline{p}(n)$ denote the number of overpartitions of $n$. For instance, there are $8$ overpartitions of $3$, namely,
\begin{equation}\notag
	(3),(\overline{3}),(2,1),(\overline{2},1),(2,\overline{1}),(\overline{2},\overline{1}),(1,1,1),(\overline{1},1,1).
\end{equation}
Hence $\overline{p}(n)=8$.

Chan, Ho and Mao \cite{chan2016truncated} introduced the function $p\overline{pp}(n)$ as the number of partition triplets $(\lambda,\alpha,\beta)$, where $\lambda$ is an ordinary partition,  $\alpha,\beta$ are both overpartitions and $|\lambda|+|\alpha|+|\beta|=n$. For example, $p\overline{pp}(2)=18$, and the $18$ triplets  are listed as follows.
\[
	\begin{array}{llllll}
		((2),\emptyset,\emptyset),&((1,1),\emptyset,\emptyset),&(\emptyset,(2),\emptyset),&(\emptyset,(\overline{2}),\emptyset),&(\emptyset,(1,1),\emptyset),&(\emptyset,(\overline{1},1),\emptyset),\\
		(\emptyset,\emptyset,(2)),&(\emptyset,\emptyset,(\overline{2})),&(\emptyset,\emptyset,(1,1)),&(\emptyset,\emptyset,(\overline{1},1)),&((1),(1),\emptyset),&((1),(\overline{1}),\emptyset),\\
		((1),\emptyset,(1)),&((1),\emptyset,(\overline{1})),&(\emptyset,(1),(1)),&(\emptyset,(\overline{1}),(1)),&(\emptyset,(1),(\overline{1})),&(\emptyset,(\overline{1}),(\overline{1})).
	\end{array}
\]

By Theorem \ref{thm-3},   the following corollary is clear.

\begin{core}\label{cor}
	For any $n\ge 0$ and $a,b\in \mathbb{Z}$,
	\[\Sg(a+b)\sum_{i=a}^{b}(6i+1)p\overline{pp}(n-\frac{i(3i+1)}{2})\ge 0.\]
\end{core}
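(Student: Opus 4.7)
The strategy is to recognize the denominator in \eqref{eqatob} as a generating function for $p\overline{pp}(n)$, and then read off the coefficient of $q^{2n}$ on both sides of the truncated identity in Theorem \ref{thm-3}.

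First I would establish that
\[
\sum_{n\ge 0} p\overline{pp}(n)\,q^{2n} \;=\; \frac{1}{(q^2;q^2)_\infty^3\,(q^2;q^4)_\infty^2}.
\]
This follows from the combinatorial definition of $p\overline{pp}(n)$ as a triple (one ordinary partition and two overpartitions), whose generating function is $(-q;q)_\infty^2/(q;q)_\infty^3$. Substituting $q\mapsto q^2$ and applying the standard rewrites $(-q^2;q^2)_\infty = (q^4;q^4)_\infty/(q^2;q^2)_\infty$ and $(q^2;q^4)_\infty = (q^2;q^2)_\infty/(q^4;q^4)_\infty$ shows that both sides reduce to $(q^4;q^4)_\infty^2/(q^2;q^2)_\infty^5$.

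Next I would multiply this generating function by the bilateral truncation $\sum_{i=a}^b (6i+1)\,q^{3i^2+i}$ from Theorem \ref{thm-3}. Since $3i^2 + i = i(3i+1)$ is even for every integer $i$ (one of the factors $i,\,3i+1$ is even), a direct Cauchy-product argument shows that the coefficient of $q^{2n}$ in
\[
\frac{\Sg(a+b)}{(q^2;q^2)_\infty^3\,(q^2;q^4)_\infty^2}\sum_{i=a}^b (6i+1)\,q^{3i^2+i}
\]
equals
\[
\Sg(a+b)\sum_{i=a}^b (6i+1)\,p\overline{pp}\!\left(n - \frac{i(3i+1)}{2}\right),
\]
which is exactly the quantity appearing in Corollary \ref{cor}.

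Finally, I would invoke Theorem \ref{thm-3}, which guarantees that the coefficient of $q^{k}$ in the above truncated series is non-negative for every $k\ge 1$; taking $k = 2n$ covers every $n\ge 1$. The boundary case $n = 0$ reduces (via $p\overline{pp}(m) = 0$ for $m < 0$ and $p\overline{pp}(0) = 1$) to the single surviving term $i=0$, and is immediate. There is no substantive obstacle in this argument: the corollary is a routine extraction of a coefficient of $q^{2n}$, and all the real content resides in Theorem \ref{thm-3}.
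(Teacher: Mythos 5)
Your argument is essentially the paper's own proof: both establish $\sum_{n\ge 0}p\overline{pp}(n)q^n=1/\bigl((q;q)_\infty^3(q;q^2)_\infty^2\bigr)$ from the generating function of overpartitions, observe that everything in \eqref{eqatob} lives in even powers of $q$ (since $i(3i+1)$ is always even), and then read the corollary off as the coefficient of $q^{2n}$ in Theorem \ref{thm-3}; whether one phrases this as ``extract the coefficient of $q^{2n}$'' or, as the paper does, ``replace $q^2$ by $q$'' is immaterial. The one place where your write-up overreaches is the boundary case $n=0$: it is not immediate, and in fact the inequality fails there. For $a=-1$, $b=0$ the only surviving term is $i=0$, so the sum equals $p\overline{pp}(0)=1$, while $\Sg(a+b)=\Sg(-1)=-1$, giving $-1<0$; this matches the fact that Theorem \ref{thm-3} only controls the coefficients of $q^k$ for $k\ge 1$. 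This defect is inherited from the paper, whose Corollary \ref{cor} is likewise stated for $n\ge 0$ but is only proved (and only true) for $n\ge 1$; you should either restrict to $n\ge 1$ or restrict the admissible $a,b$ at $n=0$, rather than call the case immediate.
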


This paper is organized as follows. In Section \ref{sectionthm2}, we first prove Lemma \ref{lemqn} which is the key ingredient of our proofs. Then we demonstrate that Theorem \ref{thm-2} is a direct consequence of Lemma \ref{lemqn}, Theorem \ref{chan2016truncated} and Theorem \ref{chan-truncated -ktok-1}. Section \ref{sectionthm-3n+1} is devoted to giving a proof of Theorem \ref{thm-3n+1}. To this end, we will show that \eqref{3n+1atobmainthm} satisfies the two restrictions in Lemma \ref{lemqn}. In fact, the first restriction is trivial to verify. Moreover, the first part of the second restriction will be proved in Lemma \ref{lem3n+1-6} and the second part of the second restriction coincides with Theorem \ref{chan2016truncated-1}. This leads to a proof of Theorem \ref{thm-3n+1}. The proof of Theorem \ref{thm-3} will be given in Section \ref{sectionthm3} in the same way. We will show that Theorem \ref{thm-3} is a consequence of Lemma \ref{lemqn}, Lemma \ref{lem6n+1-5}, and Theorem \ref{thmchan2016truncated6n+1}.   Corollary \ref{cor} will also be proved in Section \ref{sectionthm3}.

\section{Proof of Theorem \ref{thm-2}}\label{sectionthm2}

In this section, we first prove the following lemma, which will be used in the proof of Theorem \ref{thm-2}, Theorem \ref{thm-3n+1} and Theorem \ref{thm-3}. We then give a proof of Theorem \ref{thm-2} with the aid of Lemma \ref{lemqn}.

\begin{lem}\label{lemqn}
	Let $f\colon \mathbb{Z}\rightarrow \mathbb{Z}$ satisfy the following two restrictions.
	\begin{itemize}
		\item[(1)] For any $n\in\mathbb{Z}$,
		\begin{equation}\label{equ-sgn-fn}
			\Sg(n)f(n)\ge 0.
		\end{equation}
		\item[(2)]For any $k\ge 1$,
		\begin{equation}\label{equ-sum-n-k}
			\sum_{n=-k}^{k-1}f(n)\le 0\quad\text{and}\quad\sum_{n=-k}^{k}f(n)\ge 0.
		\end{equation}
	\end{itemize} 
	 Then  we have
	  \begin{equation}\label{sga+b}
	  	\Sg(a+b)\sum_{n=a}^{b}f(n)\ge 0
	  \end{equation}
	  for any $ a, b\in \mathbb{Z}$.
\end{lem}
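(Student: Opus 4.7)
The plan is to reduce the claim to the two inequalities in hypothesis (2) by a short case analysis based on where $0$ sits inside the interval $[a,b]$, and then exploit the sign condition in hypothesis (1) to pad the sum with extra terms of the correct sign.

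First I would dispose of the easy sub-cases. If $a\ge 0$, then every summand $f(n)$ for $a\le n\le b$ is nonnegative by (1), so $\sum_{n=a}^{b} f(n)\ge 0$; since $a+b\ge 0$, we have $\Sg(a+b)=1$ and the claim holds. Symmetrically, if $b< 0$, every summand is nonpositive, the sum is $\le 0$, and $\Sg(a+b)=-1$. The empty-sum case $a>b$ is trivial. The only remaining case is
\[
a<0\le b,\qquad k:=-a\ge 1,
\]
for which I would split
\[
\sum_{n=a}^{b} f(n)=\sum_{n=-k}^{-1} f(n)+\sum_{n=0}^{b} f(n).
\]

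Now I would branch on the sign of $a+b$. If $a+b\ge 0$, then $b\ge k$, so by (1) the "extra" terms $f(k+1),\dots,f(b)$ are all $\ge 0$, giving
\[
\sum_{n=0}^{b} f(n)\;\ge\;\sum_{n=0}^{k} f(n),
\]
and adding $\sum_{n=-k}^{-1}f(n)$ to both sides yields
\[
\sum_{n=a}^{b}f(n)\;\ge\;\sum_{n=-k}^{k}f(n)\;\ge\;0
\]
by the second half of (2). Since $\Sg(a+b)=1$, this gives \eqref{sga+b}. If instead $a+b<0$, then $b\le k-1$; the truncated tail $f(b+1),\dots,f(k-1)$ is again nonnegative by (1), so
\[
\sum_{n=0}^{b} f(n)\;\le\;\sum_{n=0}^{k-1} f(n),
\]
and adding $\sum_{n=-k}^{-1}f(n)$ to both sides gives
\[
\sum_{n=a}^{b}f(n)\;\le\;\sum_{n=-k}^{k-1}f(n)\;\le\;0
\]
by the first half of (2). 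Since $\Sg(a+b)=-1$, this again produces \eqref{sga+b}.

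There is no real obstacle here; the only point requiring care is matching the right half of hypothesis (2) (which is asymmetric about $0$, using $-k$ to $k-1$ and $-k$ to $k$) with the correct branch of the inequality $a+b\ge 0$ versus $a+b<0$. The whole argument is essentially a bookkeeping lemma that turns the two "one-sided" truncation inequalities of Chan--Ho--Mao type into the bilateral statement.
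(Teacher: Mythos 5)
Your proof is correct and follows essentially the same route as the paper: a case analysis on where $0$ sits in $[a,b]$, handling the one-sided cases via hypothesis (1) and the straddling case by comparing to the symmetric truncations of hypothesis (2). The only (immaterial) difference is in the sub-case $a<0\le b<-a$, where the paper applies (2) with $k=b+1$ and discards a nonpositive left tail $\sum_{n=a}^{-b-2}f(n)$, while you keep $k=-a$ and pad with the nonnegative right tail $\sum_{n=b+1}^{-a-1}f(n)$.
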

\begin{proof}
	We consider the following five cases.
	\begin{itemize}
		\item[\textbf{Case 1.}] $a>b$. Clearly in this case $\sum_{n=a}^{b}f(n)=0$. Thus \eqref{sga+b} holds.
		\item[\textbf{Case 2.}] $b\ge a\ge 0$.
		In this case, it is trivial to see that $\Sg(a+b)=1$. Moreover, by \eqref{equ-sgn-fn} we see that for $a\le n\le b$, $f(n)\ge 0$. Thus in this case, all the $f(n)$ in $\Sg(a+b)\sum_{n=a}^{b}f(n)$ are non-negative. This yields \eqref{sga+b} holds.
		\item[\textbf{Case 3.}] $a\le b<0$.
		In this case, we have $\Sg(a+b)=-1$. Moreover,   \eqref{equ-sgn-fn} implies that $f(n)\le 0$ for $a\le n\le b$. Thus  all the $f(n)$ in $\sum_{n=a}^{b}f(n)$ are non-positive. Together with $\Sg(a+b)=-1$, we derive \eqref{sga+b}.
		\item[\textbf{Case 4.}] $a<0<b$ and $b \geq -a$.
		In this case, we see that $\Sg(a+b)=1$. Moreover,
		\begin{equation}\label{eq-case4-lem}
			\sum_{n=a}^{b}f(n)=\sum_{n=a}^{-a}f(n)+\sum_{n=-a+1}^{b}f(n).
		\end{equation}
		By \eqref{equ-sum-n-k}, we see that $\sum_{n=a}^{-a}f(n)\ge 0$. Moreover, by \eqref{equ-sgn-fn} we have $f(n)\ge 0$ for $-a+1\le n\le b$, which implies $\sum_{n=-a+1}^{b}f(n)\ge 0$. From the above analysis, we have $\sum_{n=a}^{b}f(n)\ge 0$. Combining $\Sg(a+b)=1$,   we see that   \eqref{sga+b} holds.
		\item[\textbf{Case 5.}] $a<0\le b$ and $b < -a$.
		Note that $\Sg(a+b)=-1$ and
		\begin{equation}
			\sum_{n=a}^{b}f(n)=\sum_{n=a}^{-b-2}f(n)+\sum_{n=-b-1}^{b}f(n).\label{eqqnato-b-2-b-1tob}
		\end{equation}
		By \eqref{equ-sum-n-k}, we see that $\sum_{n=-b-1}^{b}f(n)\le 0$. Moreover, \eqref{equ-sgn-fn} suggests that  $f(n)\le 0$ for $a\le n\le -b-2$. Hence $\sum_{n=a}^{-b-2}f(n)\le 0$.  Together with $\Sg(a+b)=-1$,   we deduce that   \eqref{sga+b} holds.
	\end{itemize}
\end{proof}
We are now in a position to show Theorem \ref{thm-2}, which is a direct consequence of 	Theorem \ref{chan2016truncated}, Theorem \ref{chan-truncated -ktok-1} and Lemma \ref{lemqn}.

{\noindent \it Proof of Theorem \ref{thm-2}.}
For any $i\ge 0$, let $f_{i}(n)$ denote the coefficient of $q^i$ in
\[\frac{q^{n(3n+1)R/2}(q^{3nS}-q^{-(3n+1)S})}{(q^{-S},q^{R+S},q^R;q^R)_{\infty}(q^{R-2S},q^{R+2S};q^{2R})_{\infty}}.\]
In other words,
\begin{equation*}
	\frac{q^{n(3n+1)R/2}(q^{3nS}-q^{-(3n+1)S})}{(q^{-S},q^{R+S},q^R;q^R)_{\infty}(q^{R-2S},q^{R+2S};q^{2R})_{\infty}}=\sum_{i=0}^{\infty}f_{i}(n)q^i.
\end{equation*}
We proceed to verify that $f_i(n)$ satisfies the two restrictions in Lemma \ref{lemqn}. On the one hand,  when $n\ge 0$, it is easy to check that
\begin{align}\label{fracsga+b}
	& \frac{q^{n(3n+1)R/2}(q^{3nS}-q^{-(3n+1)S})}{(q^{-S},q^{R+S},q^R;q^R)_{\infty}(q^{R-2S},q^{R+2S};q^{2R})_{\infty}}\nonumber\\[3pt]
	=&\frac{q^{(3n+1)(nR/2-S)}(1-q^{(6n+1)S}) q^{S}}{(1-q^S)(q^{R-S},q^{R+S},q^R;q^R)_{\infty}(q^{R-2S},q^{R+2S};q^{2R})_{\infty}}\nonumber\\[3pt]
	=&\frac{q^{(3n+1)(nR/2-S)}(1+q^S+q^{2S}+\cdots+q^{6nS}) q^{S}}{(q^{R-S},q^{R+S},q^R;q^R)_{\infty}(q^{R-2S},q^{R+2S};q^{2R})_{\infty}}
\end{align}
which has non-negative coefficients. Thus $f_i(n)\ge 0$ for $n\ge 0$.

When $n<0$,
\begin{align}\label{fracsga+b1}
	& \frac{q^{n(3n+1)R/2}(q^{3nS}-q^{-(3n+1)S})}{(q^{-S},q^{R+S},q^R;q^R)_{\infty}(q^{R-2S},q^{R+2S};q^{2R})_{\infty}}\nonumber\\[3pt]
	=&\frac{q^{(3n+1)(nR/2-S)}(1-q^{(6n+1)S}) q^{S}}{(1-q^S)(q^{R-S},q^{R+S},q^R;q^R)_{\infty}(q^{R-2S},q^{R+2S};q^{2R})_{\infty}}\nonumber\\[3pt]
	=&\frac{q^{(3n+1)(nR/2-S)+(6n+2)S}}{(q^{R-S},q^{R+S},q^R;q^R)_{\infty}(q^{R-2S},q^{R+2S};q^{2R})_{\infty}}\frac{q^{(-6n-1)S}-1}{1-q^S}\notag\\
	=&-\frac{q^{(3n+1)(nR/2+S)}}{(q^{R-S},q^{R+S},q^R;q^R)_{\infty}(q^{R-2S},q^{R+2S};q^{2R})_{\infty}}(1+q^S+\cdots+q^{(-6n-2)S}).
\end{align}
Clearly \eqref{fracsga+b1} has non-positive coefficients. This means $f_i(n)\le 0$ for $n<0$.
So in either case, we have $\Sg(n)f_i(n)\ge 0$.

On the other hand,  Theorem  \ref{chan2016truncated} implies that
$\sum_{n=-k}^{k}f_i(n)\ge 0$ for any $i\ge 0$ and $k\ge 1$. Moreover, by Theorem \ref{chan-truncated -ktok-1}, we have $\sum_{n=-k}^{k-1}f_i(n)\le 0$ for any $ i\ge 1 $ and $k\ge 1$. Thus when $i\ge 1$, we show that $f_i(n)$ satisfies both \eqref{equ-sgn-fn} and \eqref{equ-sum-n-k}. So by Lemma \ref{lemqn}, we deduce $\Sg(a+b)\sum_{n=a}^{b}f_i(n)\ge 0$ for any $a,b\in \mathbb{Z}$ and  $i\ge 1$. This yields that the coefficients of $q^i$ in \eqref{thm2-eq} are non-negative for any $i\ge 1$.
\qed

\section{Proof of Theorem \ref{thm-3n+1}}\label{sectionthm-3n+1}
In this section, we give a proof of Theorem \ref{thm-3n+1} with the aid of Lemma \ref{lemqn}. To this end, let $g_i(n)$ denote the coefficient of $q^i$ in
\[	 \frac{(3n+1)q^{3n^2+2n}}{(q;q^2)_{\infty}^2(q^2;q^2)_{\infty}(q^4;q^4)_{\infty}^2}.
\]
In other words,
\begin{equation}\label{equ-sum-i=0}
	\sum_{i=0}^{\infty}g_i(n)q^i=\frac{(3n+1)q^{3n^2+2n}}{(q;q^2)_{\infty}^2(q^2;q^2)_{\infty}(q^4;q^4)_{\infty}^2}.\end{equation}
As stated in the Introduction, we proceed to verify that $g_i(n)$ satisfies the two {restrictions} in  Lemma \ref{lemqn}. It is clear that $g_i(n)$ meets \eqref{equ-sgn-fn}. We next verify that \eqref{equ-sum-n-k} also holds for $g_i(n)$. In fact, we have the following result.

\begin{lem}\label{lem3n+1-6}
	For $k \ge 1$ and $i\ge 1$,
	\begin{equation}\label{eqlem3n+1-ktok-1}
	\sum_{n=-k}^{k-1}g_i(n)\le 0.
	\end{equation}
\end{lem}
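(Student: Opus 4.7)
The plan is to show that $1-T_k(q)$ has non-negative coefficients in every power of $q$, where
\[T_k(q):=\frac{1}{(q;q^2)_\infty^2(q^2;q^2)_\infty(q^4;q^4)_\infty^2}\sum_{n=-k}^{k-1}(3n+1)q^{3n^2+2n}\]
is the generating series whose coefficient of $q^i$ is $\sum_{n=-k}^{k-1}g_i(n)$. Since $[q^0]T_k(q)=1$ (from the $n=0$ term), this is equivalent to \eqref{eqlem3n+1-ktok-1}. Using identity \eqref{quituple3n+1} to write the truncated sum as the full product minus its two one-sided tails, and substituting $n\to-m$ in the negative-$n$ tail, yields
\[1-T_k(q)=\frac{\sum_{n\ge k}(3n+1)q^{3n^2+2n}-\sum_{m\ge k+1}(3m-1)q^{3m^2-2m}}{(q;q^2)_\infty^2(q^2;q^2)_\infty(q^4;q^4)_\infty^2}.\]

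The approach is to combine this reformulation with Theorem \ref{chan2016truncated-1} applied at index $k-1$. Writing
\[T_k(q)=\frac{\sum_{n=-(k-1)}^{k-1}(3n+1)q^{3n^2+2n}}{(q;q^2)_\infty^2(q^2;q^2)_\infty(q^4;q^4)_\infty^2}-\frac{(3k-1)q^{3k^2-2k}}{(q;q^2)_\infty^2(q^2;q^2)_\infty(q^4;q^4)_\infty^2},\]
Theorem \ref{chan2016truncated-1} ensures that the first term has non-negative coefficients, and a direct calculation from \eqref{quituple3n+1} shows that its first nonzero coefficient past the constant $1$ occurs at $q^{3k^2-2k}$ with value exactly $3k-1$ (computed by noting that the smallest exponent in the complementary tail $\sum_{|n|\ge k}(3n+1)q^{3n^2+2n}$ is $3k^2-2k$, from $n=-k$). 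This matches exactly the leading coefficient of the correction term, so $[q^i](T_k(q)-1)=0$ for all $1\le i\le 3k^2-2k$, which settles the lemma on this range.

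For $i>3k^2-2k$, one must show that the deviation of the first term from $1$ is bounded coefficient-wise by the correction $(3k-1)q^{3k^2-2k}/[(q;q^2)_\infty^2(q^2;q^2)_\infty(q^4;q^4)_\infty^2]$. The main obstacle is that no simple termwise bound is immediate: both series are infinite power series with non-negative coefficients of comparable growth. Following the strategy of Chan-Ho-Mao in their proof of the analogous Theorem \ref{chan-truncated -ktok-1} for the asymmetric quintuple product truncation, I would seek an explicit $q$-series identity that expresses $1-T_k(q)$ as a non-negative combination of factorable products. A useful building block is the observation that $(1-q)/[(q;q^2)_\infty^2(q^2;q^2)_\infty(q^4;q^4)_\infty^2]$ has non-negative coefficients, because the factor $(1-q)$ is absorbed by one copy of $(q;q^2)_\infty^2=(1-q)^2(1-q^3)^2\cdots$; consequently each block of the form
\[\frac{(3n+1)q^{3n^2+2n}(1-q^{2n+1})}{(q;q^2)_\infty^2(q^2;q^2)_\infty(q^4;q^4)_\infty^2}=\frac{(3n+1)q^{3n^2+2n}(1+q+\cdots+q^{2n})(1-q)}{(q;q^2)_\infty^2(q^2;q^2)_\infty(q^4;q^4)_\infty^2}\]
is manifestly non-negative coefficient-wise, and such blocks can be used to assemble the required identity.
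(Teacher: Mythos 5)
Your setup is sound as far as it goes, and it partially parallels the paper: rewriting the truncated sum as the full product of \eqref{quituple3n+1} minus its two one-sided tails and shifting indices is exactly how the paper's Lemma \ref{lem3n+1-1} begins (it recasts $\sum_i\sum_{n=-k}^{k-1}g_i(n)q^i$ as $1$ plus $q^{3k^2-2k}(A+B)$ over the product), and your ``building block'' observation --- that a factor $1-q^{2n+1}$ in the numerator is harmless because the $(1-q)$ is absorbed into $(q;q^2)_\infty^2$ --- is precisely how the paper disposes of the easy piece $B$. The cancellation you compute at $q^{3k^2-2k}$ is also correct (indeed the direct tail computation shows the coefficients of $1-T_k(q)$ vanish for all $i<3k^2+2k$, so the detour through Theorem \ref{chan2016truncated-1} at index $k-1$ buys nothing extra). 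But all of this only establishes the lemma in a range of $i$ where both sides are zero; the entire content of the statement lies in the range you leave open.

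That open step is a genuine gap, not a routine verification. After combining the tails, the hard piece is (in the paper's notation)
\begin{equation*}
A=\sum_{n=0}^{\infty}(3n+3)q^{3n^2+6nk+2n+4k}\bigl(q^{2n+2k+1}-1\bigr)+\sum_{n=0}^{\infty}q^{3n^2+6nk+4n+6k+1},
\end{equation*}
a manifestly non-positive series (after dividing by the product) plus a manifestly positive one, so one must prove a coefficientwise domination between two infinite series of comparable growth --- exactly the obstacle you name and then do not overcome. ``I would seek an explicit $q$-series identity'' is where the proof actually has to happen. The paper's resolution is non-obvious: it invokes Ramanujan's identity $\sum_{n\ge0}\frac{(-1)^na^{2n}q^{n(n+1)}}{(-aq;q^2)_{n+1}}=\sum_{n\ge0}a^{3n}q^{3n^2+2n}(1-aq^{2n+1})$ (Entry 9.5.1 of the Lost Notebook), multiplies by $a^3$, differentiates with respect to $a$, and sets $a=q^{2k}$ to convert $-A$ into the three sums $A_1+A_2+A_3$ of Lemma \ref{lem3n+1-2}; it then pairs consecutive terms $n=2j+1$ and $n=2j+2$ of the resulting alternating sums and performs delicate regrouping to show each $A_i$ over the product has non-negative coefficients. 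None of that machinery --- or any substitute for it --- appears in your proposal, so the argument is incomplete at its crucial step.
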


In order to show Lemma \ref{lem3n+1-6}, we define
\[
A:=\sum_{n=0}^{\infty}(3n+3)q^{3n^2+6nk+2n+4k}(q^{2n+2k+1}-1)+\sum_{n=0}^{\infty}q^{3n^2+6nk+4n+6k+1}
\]
and
\[
B:=\sum_{n=0}^{\infty}(3k-2)q^{3n^2+6nk+2n+4k}(q^{2n+2k+1}-1).
\]
We first transform $\sum_{i=0}^{\infty}\sum_{n=-k}^{k-1}g_i(n)q^i$ as the sum of two $q$-series involving $A$ and $B$ in the following lemma. Then we prove that each summand has non-positive coefficients, which yields Lemma \ref{lem3n+1-6}.

\begin{lem}\label{lem3n+1-1}
	For $k\ge 1$, we have
	\begin{equation}
	\sum_{i=0}^{\infty}\sum_{n=-k}^{k-1}g_i(n)q^i=1+\frac{q^{3k^2-2k}}{(q;q^2)_{\infty}^2(q^2;q^2)_{\infty}(q^4;q^4)_{\infty}^2}(A+B).
	\end{equation}
\end{lem}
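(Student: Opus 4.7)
The plan is to start from the full identity \eqref{quituple3n+1}, divide both sides by $(q;q^2)_{\infty}^{2}(q^2;q^2)_{\infty}(q^4;q^4)_{\infty}^{2}$, and use the definition of $g_i(n)$ to write
\[
\sum_{i=0}^{\infty}\sum_{n=-k}^{k-1}g_i(n)q^i
= 1 - \frac{T_+(k) + T_-(k)}{(q;q^2)_{\infty}^{2}(q^2;q^2)_{\infty}(q^4;q^4)_{\infty}^{2}},
\]
where $T_+(k):=\sum_{n\ge k}(3n+1)q^{3n^2+2n}$ and $T_-(k):=\sum_{n\le -k-1}(3n+1)q^{3n^2+2n}$. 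Thus the lemma reduces to the formal power series identity
\[
q^{3k^2-2k}(A+B) = -\bigl(T_+(k) + T_-(k)\bigr).
\]

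To establish this, I would carry out the index shifts $m=n-k$ in $T_+(k)$ and $m=-n-k-1$ in $T_-(k)$. A direct computation of the shifted exponents and linear coefficients gives
\[
T_+(k) = q^{3k^2+2k}\sum_{m=0}^{\infty}(3m+3k+1)q^{3m^2+6mk+2m}
\]
and
\[
T_-(k) = -q^{3k^2+4k+1}\sum_{m=0}^{\infty}(3m+3k+2)q^{3m^2+6mk+4m},
\]
so that $-(T_+(k)+T_-(k))$ factors as $q^{3k^2-2k}$ times a difference of two sums in $m\ge 0$.

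The final step is to unfold the definitions of $A$ and $B$. Distributing the factor $(q^{2n+2k+1}-1)$ in each, the $(3n+3)$-piece of $A$ produces a sum on $q^{3n^2+6nk+4n+6k+1}$ that combines with the extra sum $\sum q^{3n^2+6nk+4n+6k+1}$ to give coefficient $(3n+4)$, while the $(3k-2)$-piece of $B$ contributes $(3k-2)$ and $-(3k-2)$ on the two exponent patterns. After collecting like terms one obtains
\[
A+B = \sum_{n=0}^{\infty}(3n+3k+2)q^{3n^2+6nk+4n+6k+1} - \sum_{n=0}^{\infty}(3n+3k+1)q^{3n^2+6nk+2n+4k},
\]
which matches the expression for $-(T_+(k)+T_-(k))/q^{3k^2-2k}$ term by term.

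The argument is essentially bookkeeping rather than the application of any deep identity; the one design choice that makes it go through is the splitting $(3m+3k+1) = (3m+3) + (3k-2)$ (and similarly $(3m+3k+2) = (3m+4) + (3k-2)$) used implicitly in the definitions of $A$ and $B$. The main obstacle is merely to track the quadratic exponents and linear coefficients carefully through the two index shifts; once the shifts are performed the identification with $A+B$ is immediate.
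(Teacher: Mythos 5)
Your proposal is correct and follows essentially the same route as the paper: subtract the two tails from the full identity \eqref{quituple3n+1}, perform the index shifts $n\mapsto n+k$ and $n\mapsto -n-k-1$, factor out $q^{3k^2-2k}$, and recognize the resulting difference of sums as $A+B$ via the splittings $(3n+3k+1)=(3n+3)+(3k-2)$ and $(3n+3k+2)=(3n+4)+(3k-2)$. All the exponent and coefficient bookkeeping in your computation checks out against the paper's \eqref{eq-3n+1-3n22n} and \eqref{eq-3n+1-3n22n-1}.
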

\begin{proof}
		Using \eqref{quituple3n+1} and \eqref{equ-sum-i=0}, we have
	\begin{small}
		\begin{align}
		&\sum_{i=0}^{\infty}\sum_{n=-k}^{k-1}g_i(n)q^i\nonumber\\	=&\frac{1}{(q;q^2)_{\infty}^2(q^2;q^2)_{\infty}(q^4;q^4)_{\infty}^2}\sum_{n=-k}^{k-1}(3n+1)q^{3n^2+2n}\notag\\
			=&\frac{1}{(q;q^2)_{\infty}^2(q^2;q^2)_{\infty}(q^4;q^4)_{\infty}^2}\left(\sum_{n=-\infty}^{\infty}(3n+1)q^{3n^2+2n}-\sum_{n=-\infty}^{-k-1}(3n+1)q^{3n^2+2n}-\sum_{n=k}^{\infty}(3n+1)q^{3n^2+2n}\right)\notag\\
			=&1-\frac{1}{(q;q^2)_{\infty}^2(q^2;q^2)_{\infty}(q^4;q^4)_{\infty}^2}\left(\sum_{n=-\infty}^{-k-1}(3n+1)q^{3n^2+2n}+\sum_{n=k}^{\infty}(3n+1)q^{3n^2+2n}\right).\label{(3n+1)-to-k-1kto+}
		\end{align}
	\end{small}
	Changing the index $n$ in $\sum_{n=-\infty}^{-k-1}(3n+1)q^{3n^2+2n}$ into $-n-k-1$, we deduce that
	\begin{equation}\label{eq-3n+1-3n22n}
		\sum_{n=-\infty}^{-k-1}(3n+1)q^{3n^2+2n}=-\sum_{n=0}^{\infty}(3n+3k+2)q^{3n^2+3k^2+6nk+4n+4k+1}.
	\end{equation}
	Similarly, we change the index $n$ in $\sum_{n=k}^{\infty}(3n+1)q^{3n^2+2n}$ into $n+k$ to obtain
	\begin{equation}\label{eq-3n+1-3n22n-1}
		\sum_{n=k}^{\infty}(3n+1)q^{3n^2+2n}=\sum_{n=0}^{\infty}(3n+3k+1)q^{3n^2+6nk+3k^2+2n+2k}.
	\end{equation}
	Substituting \eqref{eq-3n+1-3n22n} and \eqref{eq-3n+1-3n22n-1} into \eqref{(3n+1)-to-k-1kto+}, we have
	\begin{align}
		&\sum_{i=0}^{\infty}\sum_{n=-k}^{k-1}g_i(n)q^i\notag\\
		=&1+\frac{1}{(q;q^2)_{\infty}^2(q^2;q^2)_{\infty}(q^4;q^4)_{\infty}^2}\left(\sum_{n=0}^{\infty}(3n+3k+2)q^{3n^2+3k^2+6nk+4n+4k+1}\right.\nonumber\\
		&\left.-\sum_{n=0}^{\infty}(3n+3k+1)q^{3n^2+6nk+3k^2+2n+2k}\right)\notag\\
		=&1+\frac{q^{3k^2-2k}}{(q;q^2)_{\infty}^2(q^2;q^2)_{\infty}(q^4;q^4)_{\infty}^2}\left(\sum_{n=0}^{\infty}(3n+3)q^{3n^2+6nk+2n+4k}(q^{2n+2k+1}-1)\right.\nonumber\\
		&\left.+\sum_{n=0}^{\infty}q^{3n^2+6nk+4n+6k+1}+\sum_{n=0}^{\infty}(3k-2)q^{3n^2+6nk+2n+4k}(q^{2n+2k+1}-1)\right)\notag\\
		=&1+\frac{q^{3k^2-2k}}{(q;q^2)_{\infty}^2(q^2;q^2)_{\infty}(q^4;q^4)_{\infty}^2}\left(A+B\right).
	\end{align}
\end{proof}

We next show that
\[\frac{A}{(q;q^2)_{\infty}^2(q^2;q^2)_{\infty}(q^4;q^4)_{\infty}^2}\]
has non-positive coefficients. To this end, we first rewrite $A$ as $-A_1-A_2-A_3$ in the following lemma, where
\begin{align*}
	A_1:=&\sum_{n=1}^{\infty}\frac{(-1)^{n+1}q^{n^2+(4k+1)n+6k}}{(-q^{2k+1};q^2)_{n+1}}\sum_{i=0}^{n}\frac{q^{2i+1}}{1+q^{2i+2k+1}},\\
	A_2:=&\sum_{n=2}^{\infty}\frac{(-1)^n(2n+3)q^{n^2+(4k+1)n+4k}}{(-q^{2k+1};q^2)_{n+1}},	\\
	A_3:=&\frac{-q^{6k+1}}{(1+q^{2k+1})^2}+\frac{3q^{4k}}{1+q^{2k+1}}-\frac{5q^{8k+2}}{(1+q^{2k+1})(1+q^{2k+3})}.	
\end{align*}
\begin{lem}\label{lem3n+1-2} For $k\ge 1$,
	\begin{equation}
	A=-(A_1+A_2+A_3).
	\end{equation}

\end{lem}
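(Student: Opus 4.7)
The plan is to verify $A=-(A_1+A_2+A_3)$ by an algebraic manipulation of $A$ that introduces the Pochhammer products $(-q^{2k+1};q^2)_{n+1}$ appearing in the denominators of $A_1$ and $A_2$. The central tool will be the elementary identity
\begin{equation*}
\frac{q^{2n+2k+1}-1}{(-q^{2k+1};q^2)_{n+1}}=\frac{1}{(-q^{2k+1};q^2)_{n}}-\frac{2}{(-q^{2k+1};q^2)_{n+1}},
\end{equation*}
which follows from the recursion $(-q^{2k+1};q^2)_{n+1}=(1+q^{2n+2k+1})(-q^{2k+1};q^2)_n$ together with $(1+q^{2n+2k+1})-2=q^{2n+2k+1}-1$. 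This identity is tailor-made to convert the factor $(q^{2n+2k+1}-1)$ present in $A$ into a difference of reciprocal Pochhammer products, which is precisely the denominator structure of $A_1$ and $A_2$.

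Concretely, I would multiply and divide the first sum of $A$ by $(-q^{2k+1};q^2)_{n+1}$, apply the displayed identity, and then split the linear weight $3n+3=(2n+3)+n$. The $(2n+3)$-piece, after reorganization and a re-indexing $n\mapsto n+1$, should match the series defining $A_2$, whose weight is exactly $(2n+3)$. The remaining $n$-weighted piece, together with the second series in $A$ (which will play the role of a geometric correction term), is then treated by exchanging the order of summation: writing $n=\sum_{i=1}^{n}1$ and interchanging produces an inner sum over $i$ that, after combining with the Pochhammer factors via iterated use of the telescoping identity, yields the ``logarithmic-derivative'' inner sum $\sum_{i=0}^{n}q^{2i+1}/(1+q^{2i+2k+1})$ appearing in $A_1$. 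The leftover contributions at the low indices $n=0$ and $n=1$---precisely those that fall outside the summation ranges $n\ge 1$ in $A_1$ and $n\ge 2$ in $A_2$---give the three explicit rational terms making up $A_3$, and the specific coefficients $-1,3,-5$ emerge from direct evaluation of the boundary.

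The main obstacle will be the careful bookkeeping of exponents and signs. The quadratic exponent in $A_1$ and $A_2$ has leading term $n^2+(4k+1)n$, whereas $A$ has leading term $3n^2+6nk+2n$, so the manipulation must genuinely rearrange the infinite series rather than match terms index by index; the ``missing'' quadratic contribution $2n^2+6nk-2n$ must be absorbed into index shifts of the Pochhammer factor produced by iterating the telescoping identity. The alternating signs $(-1)^{n+1}$ in $A_1$ and $(-1)^n$ in $A_2$ will emerge automatically from this iteration. The most delicate point will be verifying that the three rational functions in $A_3$ appear with exactly the coefficients $-1$, $3$, and $-5$; a sanity check at $k=1$ (where $A$ begins $-3q^4+4q^7-6q^{15}+7q^{20}+\cdots$) confirms that these boundary terms combine correctly with the first few terms of $A_1$ and $A_2$ to reproduce $A$ through order $q^{20}$, which supports the telescoping strategy outlined above.
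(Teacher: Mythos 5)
There is a genuine gap. The heart of this lemma is not the elementary telescoping relation you write down (which is correct: $(-q^{2k+1};q^2)_{n+1}=(1+q^{2n+2k+1})(-q^{2k+1};q^2)_n$ does give $\frac{q^{2n+2k+1}-1}{(-q^{2k+1};q^2)_{n+1}}=\frac{1}{(-q^{2k+1};q^2)_{n}}-\frac{2}{(-q^{2k+1};q^2)_{n+1}}$), but rather the passage from a series whose exponent grows like $3n^2$ to one whose exponent grows like $n^2$. Your own description concedes that the ``missing'' quadratic contribution $2n^2+6nk-2n$ ``must be absorbed into index shifts of the Pochhammer factor produced by iterating the telescoping identity,'' but no iteration of that identity can do this: each application only trades $(-q^{2k+1};q^2)_{n+1}$ for $(-q^{2k+1};q^2)_n$ within the same term and leaves the exponent $3n^2+6nk+2n+4k$ untouched. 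Converting a partial-theta-type sum $\sum a^{3n}q^{3n^2+2n}(1-aq^{2n+1})$ into a Rogers--Fine-type sum $\sum \frac{(-1)^na^{2n}q^{n(n+1)}}{(-aq;q^2)_{n+1}}$ is a nontrivial series rearrangement; it is exactly Ramanujan's identity, Entry 9.5.1 of the Lost Notebook, and that identity (or an equivalent) is the missing ingredient. The paper's proof takes this identity, multiplies by $a^3$, differentiates in $a$, and sets $a=q^{2k}$; the differentiation is what produces the logarithmic-derivative inner sum $\sum_{i=0}^{n}\frac{q^{2i+1}}{1+q^{2i+2k+1}}$ in $A_1$, and the terms $n=0$ (for $A_1$) and $n=0,1$ (for $A_2$) split off to form $A_3$ with the coefficients $-1,3,-5$.

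Your proposed mechanism for generating that inner sum --- writing $n=\sum_{i=1}^{n}1$ and interchanging the order of summation --- would produce an inner sum of constants, not one with the denominators $1+q^{2i+2k+1}$; those denominators are the individual factors of $(-q^{2k+1};q^2)_{n+1}$ and arise naturally only from differentiating the product. Your boundary analysis (that $A_3$ collects the low-index terms excluded from the ranges $n\ge 1$ in $A_1$ and $n\ge 2$ in $A_2$) and your numerical check at $k=1$ are both correct and consistent with the paper, but they verify the statement rather than the argument. To repair the proof you should invoke Entry 9.5.1 explicitly and differentiate, as the paper does, or else supply an actual proof of that transformation --- the telescoping identity alone will not get you there.
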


\begin{proof}
	Recall that Ramanujan found the following identity (see \cite[Entry 9.5.1]{andrews2005ramanujan})
	\begin{equation}\label{raidentity3n+1}
		\sum_{n=0}^{\infty}\frac{(-1)^na^{2n}q^{n(n+1)}}{(-aq;q^2)_{n+1}}=\sum_{n=0}^{\infty}a^{3n}q^{3n^2+2n}(1-aq^{2n+1}).
	\end{equation}
	We multiply $a^3$ and then differentiate with respect to $a$ on both sides of \eqref{raidentity3n+1} to get
	\begin{align}
		&\sum_{n=0}^{\infty}\frac{(-1)^{n+1}a^{2n+3}q^{n^2+n}}{(-aq;q^2)_{n+1}}\sum_{i=0}^{n}\frac{q^{2i+1}}{1+aq^{2i+1}}+\sum_{n=0}^{\infty}\frac{(-1)^n(2n+3)a^{2n+2}q^{n^2+n}}{(-aq;q^2)_{n+1}}\nonumber\\
		=&\sum_{n=0}^{\infty}(3n+3)a^{3n+2}q^{3n^2+2n}(1-aq^{2n+1})-\sum_{n=0}^{\infty}a^{3n+3}q^{3n^2+4n+1}\label{3n+1qiudaoweitihuan}.
	\end{align}
	Replacing $a$ by $q^{2k}$ in \eqref{3n+1qiudaoweitihuan} , we have
	\begin{align}
	-A=	&\sum_{n=0}^{\infty}(3n+3)q^{6nk+4k+3n^2+2n}(1-q^{2k+2n+1})-\sum_{n=0}^{\infty}q^{6nk+6k+3n^2+4n+1}\nonumber\\
		=&\sum_{n=0}^{\infty}\frac{(-1)^{n+1}q^{2k(2n+3)+n^2+n}}{(-q^{2k+1};q^2)_{n+1}}\sum_{i=0}^{n}\frac{q^{2i+1}}{1+q^{2k+2i+1}}+\sum_{n=0}^{\infty}\frac{(-1)^n(2n+3)q^{2k(2n+2)+n^2+n}}{(-q^{2k+1};q^2)_{n+1}}\nonumber\\
		=&\sum_{n=1}^{\infty}\frac{(-1)^{n+1}q^{n^2+(4k+1)n+6k}}{(-q^{2k+1};q^2)_{n+1}}\sum_{i=0}^{n}\frac{q^{2i+1}}{1+q^{2i+2k+1}}+\sum_{n=2}^{\infty}\frac{(-1)^n(2n+3)q^{n^2+(4k+1)n+4k}}{(-q^{2k+1};q^2)_{n+1}}\nonumber\\
		&+\left(\frac{-q^{6k+1}}{(1+q^{2k+1})^2}+\frac{3q^{4k}}{1+q^{2k+1}}-\frac{5q^{8k+2}}{(1+q^{2k+1})(1+q^{2k+3})}\right)\nonumber\\
		=&A_1+A_2+A_3.	
	\end{align}
\end{proof}
	
	The following three lemmas show that
	\[\frac{A_1}{(q;q^2)_{\infty}^2(q^2;q^2)_{\infty}(q^4;q^4)_{\infty}^2},\quad\frac{A_2}{(q;q^2)_{\infty}^2(q^2;q^2)_{\infty}(q^4;q^4)_{\infty}^2},\quad\frac{A_3}{(q;q^2)_{\infty}^2(q^2;q^2)_{\infty}(q^4;q^4)_{\infty}^2}\]
	all have non-negative coefficients, respectively. Thus combining Lemma \ref{lem3n+1-2}, we arrive at
	\[\frac{A}{(q;q^2)_{\infty}^2(q^2;q^2)_{\infty}(q^4;q^4)_{\infty}^2}\]
	has non-positive coefficients.

\begin{lem}\label{lem3n+1-3}
	\begin{equation}
	\frac{A_1}{(q;q^2)_{\infty}^2(q^2;q^2)_{\infty}(q^4;q^4)_{\infty}^2}
	\end{equation}
	has non-negative coefficients.
\end{lem}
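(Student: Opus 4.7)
The plan is to show that $A_1/D$ has non-negative coefficients, where $D:=(q;q^2)_\infty^2(q^2;q^2)_\infty(q^4;q^4)_\infty^2$, by pairing consecutive summands of the alternating series defining $A_1$. Introduce the abbreviations $T_n:=q^{n^2+(4k+1)n+6k}/(-q^{2k+1};q^2)_{n+1}$ and $S_n:=\sum_{i=0}^{n}q^{2i+1}/(1+q^{2i+2k+1})$, so that $A_1=\sum_{n\ge 1}(-1)^{n+1}T_n S_n$, and note the basic recursions $T_{n+1}=T_n\cdot q^{2n+4k+2}/(1+q^{2n+2k+3})$ and $S_{n+1}=S_n+q^{2n+3}/(1+q^{2n+2k+3})$.

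First, pairing gives $A_1=\sum_{m\ge 1}(T_{2m-1}S_{2m-1}-T_{2m}S_{2m})$, and the recursions reduce each difference to
\[
T_{2m-1}S_{2m-1}-T_{2m}S_{2m}=\frac{T_{2m-1}}{1+q^{4m+2k+1}}\left[(1+q^{4m+2k+1}-q^{4m+4k})S_{2m-1}-\frac{q^{8m+4k+1}}{1+q^{4m+2k+1}}\right].
\]
The negative contribution $-q^{4m+4k}$ inside the bracket can be absorbed by combining with the factor $(1-q^{4m+4k})$ in $(q^2;q^2)_\infty$, since $(1+q^{4m+2k+1}-q^{4m+4k})/(1-q^{4m+4k})=1+q^{4m+2k+1}/(1-q^{4m+4k})$ has non-negative coefficients; similarly, the residual $-q^{8m+4k+1}/(1+q^{4m+2k+1})$ is handled via $(1+q^{4m+2k+1})(1-q^{4m+2k+1})=1-q^{8m+4k+2}$ matched with $(1-q^{8m+4k+2})$ from $(q^2;q^2)_\infty$. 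For distinct values of $m$, the exponents $4m+4k$ and $8m+4k+2$ land on distinct factors of $(q^2;q^2)_\infty$, so no conflict arises in the bookkeeping across different pairs.

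The main obstacle is that the inner sum $S_{2m-1}$ itself has mixed-sign expansion, because each summand $q^{2i+1}/(1+q^{2i+2k+1})$ alternates in sign as a formal power series. To track signs systematically, I will convert $1/(-q^{2k+1};q^2)_{2m+1}$ into $(q^{2k+1};q^2)_{2m+1}/(q^{4k+2};q^4)_{2m+1}$ via $1/(1+q^a)=(1-q^a)/(1-q^{2a})$, exposing positive factors $(q^{2k+1};q^2)_{2m+1}$ that combine with matching factors of $D^{-1}$ to yield a manifestly non-negative quotient; the resulting denominator $(q^{4k+2};q^4)_{2m+1}$ is compatible with the $(q^4;q^4)_\infty^2$ factor of $D$. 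Verifying that every negative term in each paired difference is cancelled by a corresponding positive factor in $D$, for every $m\ge 1$, constitutes the technical heart of the proof.
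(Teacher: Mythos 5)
Your pairing of consecutive terms and the resulting reduction
\[
T_{2m-1}S_{2m-1}-T_{2m}S_{2m}=\frac{T_{2m-1}}{1+q^{4m+2k+1}}\Bigl[(1+q^{4m+2k+1}-q^{4m+4k})S_{2m-1}-\frac{q^{8m+4k+1}}{1+q^{4m+2k+1}}\Bigr]
\]
are exactly what the paper does (its index $j$ is your $m-1$), and your treatment of the prefactor $1/(-q^{2k+1};q^2)_{2m+1}$ and of the summands $q^{2i+1}/(1+q^{2i+2k+1})$ of $S_{2m-1}$ via odd factors of $(q;q^2)_\infty$ is sound and matches the paper's \eqref{a13n+1part1}--\eqref{a13n+1part3}. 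The absorption of $-q^{4m+4k}$ is also workable: the paper instead writes $1-q^{4m+4k}=(1-q)(1+q+\cdots+q^{4m+4k-1})$ against the $(1-q)$ in $(q;q^2)_\infty$, but using the factor $(1-q^{4m+4k})$ of $(q^2;q^2)_\infty$ as you propose works equally well. Your worry about conflicts between different values of $m$ is a non-issue, since each summand is divided by a full copy of $D$ separately.

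The genuine gap is your handling of the residual $-q^{8m+4k+1}/(1+q^{4m+2k+1})$. Rewriting $1/(1+q^{4m+2k+1})$ as $(1-q^{4m+2k+1})/(1-q^{8m+4k+2})$ and cancelling $(1-q^{8m+4k+2})$ against $(q^2;q^2)_\infty$ leaves $-q^{8m+4k+1}(1-q^{4m+2k+1})$ times a power series with non-negative coefficients and constant term $1$, whose coefficient at $q^{8m+4k+1}$ is strictly negative. No factor extracted from $1/D$ can repair a term whose lowest coefficient is negative; the residual must be cancelled against a positive \emph{term} of the bracket, not a factor of the denominator. The paper does this by splitting off the $i=0$ summand of the positive piece $q^{4m+2k+1}S_{2m-1}$, namely $q^{4m+2k+2}/(1+q^{2k+1})$, and verifying
\[
\frac{q^{4m+2k+2}}{1+q^{2k+1}}-\frac{q^{8m+4k+1}}{1+q^{4m+2k+1}}=\frac{q^{4m+2k+2}\bigl(1-q^{4m+2k-1}\bigr)+q^{8m+4k+3}\bigl(1-q^{2k-1}\bigr)}{(1+q^{2k+1})(1+q^{4m+2k+1})},
\]
after which the numerator factors $(1-q^{4m+2k-1})$ and $(1-q^{2k-1})$ are absorbed into $(q;q^2)_\infty$; this is where the hypothesis $k\ge 1$ enters, so that $2k-1\ge 1$ is an admissible odd exponent. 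This cancellation is precisely the ``technical heart'' you defer, and the mechanism you sketch for that term cannot replace it.
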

\begin{proof}
	 {When $n=2j+1$} for some $j\ge 0$, 
\[\frac{(-1)^{n+1}q^{n^2+(4k+1)n+6k}}{(-q^{2k+1};q^2)_{n+1}}\sum_{i=0}^{n}\frac{q^{2i+1}}{1+q^{2i+2k+1}}=\frac{q^{4j^2+6j+10k+8kj+2}}{(-q^{2k+1};q^2)_{2j+2}}\sum_{i=0}^{2j+1}\frac{q^{2i+1}}{1+q^{2i+2k+1}},\]
and {when $n=2j+2$} for some $j\ge 0$, 
\[\frac{(-1)^{n+1}q^{n^2+(4k+1)n+6k}}{(-q^{2k+1};q^2)_{n+1}}\sum_{i=0}^{n}\frac{q^{2i+1}}{1+q^{2i+2k+1}}=-\frac{q^{4j^2+10j+14k+8kj+6}}{(-q^{2k+1};q^2)_{2j+3}}\sum_{i=0}^{2j+2}\frac{q^{2i+1}}{1+q^{2i+2k+1}}.\]
From the above analysis, we see that
\begin{align}
	&\frac{A_1}{(q;q^2)_{\infty}^2(q^2;q^2)_{\infty}(q^4;q^4)_{\infty}^2}\nonumber\\
	=&\frac{1}{(q;q^2)_{\infty}^2(q^2;q^2)_{\infty}(q^4;q^4)_{\infty}^2}\sum_{n=1}^{\infty}\frac{(-1)^{n+1}q^{n^2+(4k+1)n+6k}}{(-q^{2k+1};q^2)_{n+1}}\sum_{i=0}^{n}\frac{q^{2i+1}}{1+q^{2i+2k+1}}\nonumber\\
	=&\frac{1}{(q;q^2)_{\infty}^2(q^2;q^2)_{\infty}(q^4;q^4)_{\infty}^2}\sum_{j=0}^{\infty}\left(\frac{q^{4j^2+6j+10k+8kj+2}}{(-q^{2k+1};q^2)_{2j+2}}\sum_{i=0}^{2j+1}\frac{q^{2i+1}}{1+q^{2i+2k+1}}\right.\nonumber\\
	&\left.-\frac{q^{4j^2+10j+14k+8kj+6}}{(-q^{2k+1};q^2)_{2j+3}}\sum_{i=0}^{2j+2}\frac{q^{2i+1}}{1+q^{2i+2k+1}}\right)\nonumber\\
	=&\frac{1}{(q;q^2)_{\infty}^2(q^2;q^2)_{\infty}(q^4;q^4)_{\infty}^2}\sum_{j=0}^{\infty}\frac{q^{4j^2+6j+10k+8kj+2}}{(-q^{2k+1};q^2)_{2j+3}}\left(\sum_{i=0}^{2j+1}\frac{q^{2i+1}}{1+q^{2i+2k+1}}\right.\nonumber\\
	&\left.+q^{2k+4j+5}\sum_{i=0}^{2j+1}\frac{q^{2i+1}}{1+q^{2i+2k+1}}-q^{4j+4k+4}\left(\sum_{i=0}^{2j+1}\frac{q^{2i+1}}{1+q^{2i+2k+1}}+\frac{q^{4j+5}}{1+q^{4j+2k+5}}\right)\right)\nonumber\\
	=&\frac{1}{(q;q^2)_{\infty}^2(q^2;q^2)_{\infty}(q^4;q^4)_{\infty}^2}\sum_{j=0}^{\infty}\frac{q^{4j^2+6j+10k+8kj+2}}{(-q^{2k+1};q^2)_{2j+3}}\left((1-q^{4j+4k+4})\sum_{i=0}^{2j+1}\frac{q^{2i+1}}{1+q^{2i+2k+1}}\right.\nonumber\\
	&\left.+q^{2k+4j+5}\sum_{i=1}^{2j+1}\frac{q^{2i+1}}{1+q^{2i+2k+1}}+\left(q^{2k+4j+5}\frac{q}{1+q^{2k+1}}-q^{4k+4j+4}\frac{q^{4j+5}}{1+q^{4j+2k+5}}\right)\right)\nonumber\\
	=&\frac{1}{(q^2;q^2)_{\infty}(q^4;q^4)^2_{\infty}}\sum_{j=0}^{\infty}\frac{q^{4j^2+6j+10k+8kj+2}}{(-q^{2k+1};q^2)_{2j+3}(q;q^2)_{\infty}}\left(\sum_{i=0}^{2j+1}\frac{q^{2i+1}(1-q^{4j+4k+4})}{(1+q^{2i+2k+1})(q;q^2)_{\infty}}\right.\nonumber\\
	&\left.+q^{2k+4j+5}\sum_{i=1}^{2j+1}\frac{q^{2i+1}}{(1+q^{2i+2k+1})(q;q^2)_{\infty}}+\frac{q^{2k+4j+6}(1-q^{4j+2k+3})+q^{8j+4k+11}(1-q^{2k-1})}{(q;q^2)_{\infty}(1+q^{2k+1})(1+q^{4j+2k+5})}\right)\label{a13n+1huajianfeifu}.
\end{align}
We proceed to show that \eqref{a13n+1huajianfeifu} has non-negative coefficients. First, it is easy to see that
\begin{equation}
	\frac{1}{(-q^{2k+1};q^2)_{2j+3}(q;q^2)_{\infty}}=\frac{(-q^{2k+4j+7};q^2)_{\infty}}{(q;q^2)_{\infty}(-q^{2k+1};q^2)_{\infty}}=\frac{(-q^{2k+4j+7};q^2)_{\infty}}{(q;q^2)_{k}(q^{4k+2};q^4)_{\infty}}\label{a13n+1part1}
\end{equation}
has non-negative coefficients. Moverover
\begin{align}
	\frac{1-q^{4j+4k+4}}{(1+q^{2i+2k+1})(q;q^2)_{\infty}}=\frac{1+q+q^2+\cdots+q^{4j+4k+3}}{(1-q^{4i+4k+2})(q^3;q^2)_{i+k-1}(q^{2i+2k+3};q^2)_{\infty}}\label{a13n+1part2}
\end{align}
has non-negative coefficients. Similarly,
\begin{equation}
	\frac{1}{(1+q^{2i+2k+1})(q;q^2)_{\infty}}=\frac{1}{(1-q^{4k+4i+2})(q;q^2)_{k+i}(q^{2k+2i+3};q^2)_\infty}\label{a13n+1part3}
\end{equation}
has non-negative coefficients. Furthermore,

\begin{align}
	&\frac{q^{2k+4j+6}(1-q^{4j+2k+3})+q^{8j+4k+11}(1-q^{2k-1})}{(q;q^2)_{\infty}(1+q^{2k+1})(1+q^{4j+2k+5})}\nonumber\\
	=&\frac{q^{2k+4j+6}}{(q;q^2)_k(q^{2k+3};q^2)_{2j}(q^{4j+2k+7};q^2)_{\infty}(1-q^{4k+2})(1-q^{8j+4k+10})}\nonumber\\
	&+\frac{q^{8j+4k+11}}{(q;q^2)_{k-1}(q^{2k+3};q^2)_{2j+1}(q^{4j+2k+7};q^2)_{\infty}(1-q^{4k+2})(1-q^{8j+4k+10})}\label{a13n+1part4}
\end{align}
also has non-negative coefficients.
Combining \eqref{a13n+1part1}, \eqref{a13n+1part2}, \eqref{a13n+1part3} and \eqref{a13n+1part4}, we complete the proof.
	
\end{proof}

\begin{lem}\label{lem3n+1-4}
	\begin{equation}
\frac{A_2}{(q;q^2)_{\infty}^2(q^2;q^2)_{\infty}(q^4;q^4)_{\infty}^2}
\end{equation}
has non-negative coefficients.
\end{lem}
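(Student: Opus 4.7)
My plan is to follow the template of Lemma \ref{lem3n+1-3}: pair consecutive summands of $A_2$, combine each pair over a common denominator, and then split the resulting bracket into three pieces that can be handled individually. Writing $n=2j+2$ (even, positive sign) and $n=2j+3$ (odd, negative sign) for $j\ge 0$, and using $(-q^{2k+1};q^2)_{2j+4}=(1+q^{4j+2k+7})\cdot(-q^{2k+1};q^2)_{2j+3}$, the $j$th pair of summands of $A_2$ collapses to
\[
\frac{q^{4j^2+10j+8kj+12k+6}}{(-q^{2k+1};q^2)_{2j+4}}\cdot E(j,k),
\]
where $E(j,k):=(4j+7)(1+q^{4j+2k+7})-(4j+9)q^{4j+4k+6}$.

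The crucial algebraic step is the three-term decomposition
\[
E(j,k)=(4j+5)+(4j+7)q^{4j+2k+7}(1-q^{2k-1})+2(1-q^{4j+4k+6}),
\]
which is easily verified by expansion. To tame $(-q^{2k+1};q^2)_{2j+4}^{-1}$, whose direct expansion alternates in sign, I apply the identity $(-a;q^2)_n=(a^2;q^4)_n/(a;q^2)_n$ with $a=q^{2k+1}$, giving $(-q^{2k+1};q^2)_{2j+4}^{-1}=(q^{2k+1};q^2)_{2j+4}/(q^{4k+2};q^4)_{2j+4}$. The factorization $(q;q^2)_\infty=(q;q^2)_k\cdot(q^{2k+1};q^2)_{2j+4}\cdot(q^{4j+2k+9};q^2)_\infty$ then absorbs the numerator $(q^{2k+1};q^2)_{2j+4}$ into one copy of $(q;q^2)_\infty$ from the full denominator, producing the manifestly non-negative base factor
\[
\frac{1}{(q;q^2)_k\,(q^{4j+2k+9};q^2)_\infty\,(q;q^2)_\infty\,(q^2;q^2)_\infty\,(q^4;q^4)_\infty^2\,(q^{4k+2};q^4)_{2j+4}}.
\]

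It then suffices to check that each of the three pieces of $E(j,k)$ contributes non-negatively when multiplied by this base factor. The constant $(4j+5)$ is trivially non-negative; the factor $(1-q^{2k-1})$ in the second piece cancels the $(1-q^{2k-1})$ factor of $(q;q^2)_k$ (which is present precisely because $k\ge 1$); and the factor $(1-q^{4j+4k+6})$ in the third piece cancels the corresponding even-exponent factor of $(q^2;q^2)_\infty$ (present because $4j+4k+6$ is a positive even integer). After each cancellation, the remaining denominator is still a product of factors of the form $1-q^{\text{positive}}$, so the coefficients remain non-negative, and summing over $j\ge 0$ completes the argument. The main obstacle is the non-obvious positivity contribution of $(-q^{2k+1};q^2)_{2j+4}^{-1}$; it is resolved by the $(-a;q^2)_n$ identity together with a choice of three-term split of $E(j,k)$ whose \emph{correction} factors $(1-q^{2k-1})$ and $(1-q^{4j+4k+6})$ are exactly those that can be cancelled against $(q;q^2)_k$ and $(q^2;q^2)_\infty$.
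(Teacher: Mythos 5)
Your proposal is correct and follows essentially the same route as the paper: pair the even-indexed and odd-indexed terms of $A_2$ over the common denominator $(-q^{2k+1};q^2)_{2j+4}$, split the resulting bracket into three manifestly non-negative pieces, and cancel the correction factors $(1-q^{2k-1})$ and $(1-q^{\text{even}})$ against $(q;q^2)_k$ and $(q^2;q^2)_\infty$ respectively. The only (immaterial) differences are the re-indexing $j\mapsto j+1$, the fact that your three-term split assigns the coefficient $4j+7$ to the $(1-q^{2k-1})$ piece and the coefficient $2$ to the even-exponent piece (the paper does the reverse), and your use of $(-a;q^2)_n=(a^2;q^4)_n/(a;q^2)_n$ in place of the paper's completion of $(-q^{2k+1};q^2)_{2j+2}$ to an infinite product.
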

\begin{proof}
	First, we have
	\begin{align}
		&\frac{A_2}{(q;q^2)_{\infty}^2(q^2;q^2)_{\infty}(q^4;q^4)_{\infty}^2}\nonumber\\
		=&\frac{1}{(q;q^2)_{\infty}^2(q^2;q^2)_{\infty}(q^4;q^4)_{\infty}^2}\sum_{n=2}^{\infty}\frac{(-1)^n(2n+3)q^{n^2+(4k+1)n+4k}}{(-q^{2k+1};q^2)_{n+1}}\nonumber\\
		=&\frac{1}{(q;q^2)_{\infty}^2(q^2;q^2)_{\infty}(q^4;q^4)_{\infty}^2}\sum_{j=1}^{\infty}\left(\frac{(4j+3)q^{4j^2+8kj+2j+4k}}{(-q^{2k+1};q^2)_{2j+1}}-\frac{(4j+5)q^{4j^2+8kj+6j+8k+2}}{(-q^{2k+1};q^2)_{2j+2}}\right)\nonumber\\
		=&\frac{1}{(q;q^2)_{\infty}^2(q^2;q^2)_{\infty}(q^4;q^4)_{\infty}^2}\sum_{j=1}^{\infty}\frac{q^{4j^2+8kj+2j+4k}}{(-q^{2k+1};q^2)_{2j+2}}\Bigg((4j+3)(1+q^{2k+4j+3})-q^{4j+4k+2}(4j+5)\Bigg)\nonumber\\
		=&\frac{1}{(q^2;q^2)_{\infty}(q^4;q^4)_{\infty}^2}\sum_{j=1}^{\infty}\frac{q^{4j^2+8kj+2j+4k}}{(q;q^2)_{\infty}(-q^{2k+1};q^2)_{2j+2}}\Bigg(\frac{(4j+3)(1-q^{4j+4k+2})}{(q;q^2)_{\infty}}\nonumber\\
		&+\frac{2q^{2k+4j+3}(1-q^{2k-1})+(4j+1)q^{2k+4j+3}}{(q;q^2)_{\infty}}\Bigg).\label{a23n+1huajianfeifu}
	\end{align}
On the one hand,
	\begin{align}
		\frac{1}{(q;q^2)_{\infty}(-q^{2k+1};q^2)_{2j+2}}=\frac{(-q^{2k+4j+5};q^2)_{\infty}}{(q;q^2)_{\infty}(-q^{2k+1};q^2)_{\infty}}
		=\frac{(-q^{2k+4j+5};q^2)_{\infty}}{(q;q^2)_{k}(q^{4k+2};q^2)_{\infty}}\label{a23n+1part1}
	\end{align}
	has non-negative coefficients. On the other hand, it is easy to check that
	$$\frac{1-q^{4j+4k+2}}{(q;q^2)_{\infty}}=\frac{1+q+\cdots+q^{4j+4k+1}}{(q^3;q^2)_{\infty}}$$
	and
	$$
	\frac{1-q^{2k-1}}{(q,q^2)_{\infty}}=\frac{1}{(q;q^2)_{k-1}(q^{2k+1},q^2)_{\infty}}
	$$
both have non-negative coefficients. From the above analysis, we complete the proof.
\end{proof}

\begin{lem}\label{lem3n+1-5}
	\begin{equation}
	\frac{A_3}{(q;q^2)_{\infty}^2(q^2;q^2)_{\infty}(q^4;q^4)_{\infty}^2}
	\end{equation}
	has non-negative coefficients.
\end{lem}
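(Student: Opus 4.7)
The plan is to first combine $A_3$ over the common denominator $(1+q^{2k+1})^2(1+q^{2k+3})$. A direct expansion yields
\[
A_3 = \frac{N}{(1+q^{2k+1})^2(1+q^{2k+3})}, \qquad N := 3q^{4k} + 2q^{6k+1} + 3q^{6k+3} - 5q^{8k+2} + 2q^{8k+4} - 5q^{10k+3}.
\]
Setting
\[
M := \frac{1}{(q;q^2)_\infty^2(q^2;q^2)_\infty(q^4;q^4)_\infty^2(1+q^{2k+1})^2(1+q^{2k+3})},
\]
the claim reduces to showing that $M\cdot N$ has non-negative coefficients. The two negative monomials $-5q^{8k+2}$ and $-5q^{10k+3}$ in $N$ must be split across the positive monomials so that $N$ becomes a sum of ``differences'' $q^{a}(1-q^{b})$ with $a,b>0$; solving the linear system of coefficient constraints suggests the decomposition
\begin{align*}
N &= 2q^{4k}(1-q^{4k+2}) + q^{4k}(1-q^{6k+3}) + 2q^{6k+1}(1-q^{4k+2}) \\
&\quad + 3q^{6k+3}(1-q^{2k-1}) + 2q^{8k+4}(1-q^{2k-1}),
\end{align*}
which I would verify by direct expansion. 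In the context of the application in Lemma \ref{lem3n+1-6} one has $k\ge 1$, so that $q^{2k-1}$ carries a positive exponent.

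It then suffices to check that $M(1-q^{r})$ has non-negative coefficients for each $r\in\{4k+2,\,6k+3,\,2k-1\}$. For this I would first use $(1+q^{2k+j}) = (1-q^{4k+2j})/(1-q^{2k+j})$ for $j=1,3$ to clear the $(1+q^{\cdot})$ factors from $M$, then absorb the resulting $(1-q^{2k+1})^2(1-q^{2k+3})$ in the numerator against two factors of $(q;q^2)_\infty^2$, obtaining
\[
M = \frac{1}{(q^2;q^2)_\infty(q^4;q^4)_\infty^2(1-q^{4k+2})^2(1-q^{4k+6})\prod_{j\neq k}(1-q^{2j+1})\prod_{j\neq k,\,k+1}(1-q^{2j+1})}.
\]
With $M$ in this form, the factor $1-q^{4k+2}$ cancels one copy of $(1-q^{4k+2})^2$; the factor $1-q^{6k+3}$ cancels the $j=3k+1$ term in $\prod_{j\neq k}(1-q^{2j+1})$ (legitimate because $3k+1\neq k,k+1$ when $k\ge 1$); and the factor $1-q^{2k-1}$ cancels the $j=k-1$ term in the same product. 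In each case what remains is the reciprocal of a finite product of $(1-q^{\text{positive}})$ factors, and hence has non-negative coefficients.

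The main obstacle I expect is step one: identifying the precise splitting of $N$ into five non-negative pieces of the form $cq^{a}(1-q^{b})$ in such a way that every $1-q^{b}$ factor is guaranteed to cancel something in the (rationalized) denominator of $M$. Once that decomposition is located, the remaining work is routine bookkeeping with $q$-products, essentially parallel in spirit to the manipulations in the proofs of Lemma \ref{lem3n+1-3} and Lemma \ref{lem3n+1-4}.
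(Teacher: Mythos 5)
Your proposal is correct and follows essentially the same route as the paper: the numerator $N$ you compute is exactly the paper's, your five-term splitting $2q^{4k}(1-q^{4k+2})+q^{4k}(1-q^{6k+3})+2q^{6k+1}(1-q^{4k+2})+3q^{6k+3}(1-q^{2k-1})+2q^{8k+4}(1-q^{2k-1})$ is the paper's decomposition $(2q^{6k+1}+2q^{4k})(1-q^{4k+2})+(2q^{8k+4}+3q^{6k+3})(1-q^{2k-1})+q^{4k}(1-q^{6k+3})$ merely distributed, and the cancellation of each $(1-q^{b})$ against factors of $(q;q^2)_\infty^2$ and $(1-q^{4k+2})^2(1-q^{4k+6})$ is the same mechanism the paper uses (your correct observation that $k\ge 1$ is needed for $1-q^{2k-1}$ matches the paper's standing hypothesis). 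The only cosmetic slip is calling the leftover denominator a ``finite product''; it is an infinite product of factors $1-q^{m}$ with $m\ge 1$, whose reciprocal still has non-negative coefficients, so the argument is unaffected.
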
	
\begin{proof}
	It is easy to see that
	\begin{align}
	&\frac{A_3}{(q;q^2)_{\infty}^2(q^2;q^2)_{\infty}(q^4;q^4)_{\infty}^2}\nonumber\\
		=&\frac{1}{(q;q^2)_{\infty}^2(q^2;q^2)_{\infty}(q^4;q^4)_{\infty}^2}\left(\frac{-q^{6k+1}}{(1+q^{2k+1})^2}+\frac{3q^{4k}}{1+q^{2k+1}}-\frac{5q^{8k+2}}{(1+q^{2k+1})(1+q^{2k+3})}\right)\nonumber\\[3pt]
		=&\frac{-q^{6k+1}(1+q^{2k+3})+3q^{4k}(1+q^{2k+1})(1+q^{2k+3})-5q^{8k+2}(1+q^{2k+1})}{(q;q^2)_{\infty}^2(q^2;q^2)_{\infty}(q^4;q^4)_{\infty}^2(1+q^{2k+1})^2(1+q^{2k+3})}\nonumber\\[3pt]
		=&\frac{(2q^{6k+1}+2q^{4k})(1-q^{4k+2})+(2q^{8k+4}+3q^{6k+3})(1-q^{2k-1})+q^{4k}(1-q^{6k+3})}{(q;q^2)_{\infty}^2(q^2;q^2)_{\infty}(q^4;q^4)_{\infty}^2(1+q^{2k+1})^2(1+q^{2k+3})}\nonumber\\
		=&\frac{1}{(q^4;q^4)_{\infty}^2(1-q^{4k+2})^2(1-q^{4k+6})}
		\left(\frac{2q^{6k+1}+2q^{4k}}{(q;q^2)_{k}^2(q^{2k+3};q^2)_{\infty}(q^{2k+5};q^2)_{\infty}(q^2;q^2)_{2k}(q^{4k+4};q^2)_{\infty}}\right.\nonumber\\
		&+\left.\frac{2q^{8k+4}+3q^{6k+3}}{(q;q^2)_{k-1}^2(1-q^{2k-1})(q^{2k+3};q^2)_{\infty}(q^{2k+5};q^2)_{\infty}(q^2;q^2)_{\infty}}\right.\nonumber\\
		&+\left.\frac{q^{4k}}{(q;q^2)_{k}^{2}(q^2;q^2)_{\infty}(q^{2k+5};q^2)_{2k-1}^2(1-q^{2k+3})(1-q^{6k+3})(q^{6k+5};q^2)_{\infty}^2}\right).\label{a33n+1huajianhou}
	\end{align}
	Clearly, each summand in \eqref{a33n+1huajianhou} has non-negative coefficients. Thus we complete the proof.
\end{proof}

We are now in a position to prove Lemma \ref{lem3n+1-6}.

\noindent{\it Proof of Lemma \ref{lem3n+1-6}.}
	From Lemma \ref{lem3n+1-1}, it suffices to show that both $\dfrac{A}{(q;q^2)_{\infty}^2(q^2;q^2)_{\infty}(q^4;q^4)_{\infty}^2}$
	 and
	 $\dfrac{B}{(q;q^2)_{\infty}^2(q^2;q^2)_{\infty}(q^4;q^4)_{\infty}^2}$
	 has non-positive coefficients. By Lemma \ref{lem3n+1-2} $\sim$ Lemma \ref{lem3n+1-5}, we see that
	 $$\frac{A}{(q;q^2)_{\infty}^2(q^2;q^2)_{\infty}(q^4;q^4)_{\infty}^2}$$  has non-positive coefficients. We proceed to show that $$\frac{B}{(q;q^2)_{\infty}^2(q^2;q^2)_{\infty}(q^4;q^4)_{\infty}^2}$$
	 also has non-positive coefficients.
	
In fact,
	\begin{align}
		&\frac{B}{(q;q^2)_{\infty}^2(q^2;q^2)_{\infty}(q^4;q^4)_{\infty}^2}\nonumber\\
		=&\frac{1}{(q;q^2)_{\infty}^2(q^2;q^2)_{\infty}(q^4;q^4)_{\infty}^2}\sum_{n=0}^{\infty}(3k-2)q^{3n^2+6nk+2n+4k}(q^{2n+2k+1}-1)\nonumber\\
		=&-\sum_{n=0}^{\infty}\frac{(3k-2)q^{3n^2+6nk+2n+4k}}{(q;q^2)_{\infty}(q^2;q^2)_{\infty}(q^4;q^4)_{\infty}^2(q;q^2)_{n+k}(q^{2n+2k+3};q^2)_\infty},\nonumber
	\end{align}
which	has non-positive coefficients.	 \qed

Now we prove  Theorem \ref{thm-3n+1}.

{\noindent \it Proof of Theorem \ref{thm-3n+1}.}
By the definition of $g_i(n)$, we have  $g_i(n)\ge 0$ when $n\ge 0$ and  $g_i(n)\le 0$ when $n <0$. Moreover, from Theorem \ref{chan2016truncated-1} we know that $\sum_{n=-k}^{k}g_i(n)\ge 0$ for any $i\ge 0$ and $k\ge 1$. By Lemma \ref{lem3n+1-6}, we see that $\sum_{n=-k}^{k-1}g_i(n)\le 0$ for any $i\ge 1$ and $k\ge 1$. Hence, using Lemma \ref{lemqn}, we deduce that $\Sg(a+b)\sum_{n=a}^{b}g_i(n)\ge 0$ for any $a,b\in \mathbb{Z}$ and $i\ge 1$, which means that the coefficients of $q^i$ in \eqref{3n+1atobmainthm} are non-negative for any $i\ge 1$. This completes the proof.
\qed

\section{Proof of Theorem \ref{thm-3}}\label{sectionthm3}
This section is devoted to proving Theorem \ref{thm-3}. 
 For any $n\in\mathbb{Z}$ and $i\ge 0$, define
\[\sum_{i=0}^\infty h_i(n)q^i=\frac{(6n+1)q^{3n^2+n}}{(q^2;q^2)_{\infty}^{3}(q^2;q^4)_{\infty}^{2}}.\]

Similar with the proof of Theorem \ref{thm-3n+1}, we first rewrite $\sum_{i=0}^{\infty}\sum_{n=-k}^{k-1}h_i(n)q^i$ as the sum of two $q$-series in Lemma \ref{lem6n+1-1}. With the same arguments as in the proof of Theorem \ref{thm-3n+1}, we deduce that $\sum_{n=-k}^{k-1}h_i(n)\le 0$ in Lemma \ref{lem6n+1-5}. Together with Theorem \ref{thmchan2016truncated6n+1} and Lemma \ref{lemqn}, we will finish the proof of Theorem \ref{thm-3}.

We first show that $h_i(n)$ satisfies the first inequality in \eqref{equ-sum-n-k} as stated below. 
\begin{lem}\label{lem6n+1-5}
	For $k \geq 1$ and $i\ge 1$,
	\begin{equation}\label{lem5eq6n+1-ktok-1}
		\sum_{n=-k}^{k-1}h_i(n)\le 0.
	\end{equation}
\end{lem}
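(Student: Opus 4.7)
The argument runs in parallel with the proof of Lemma \ref{lem3n+1-6}. I first apply \eqref{quituple6n+1} to replace $\sum_{n=-k}^{k-1}(6n+1)q^{3n^2+n}$ by the full bilateral sum minus its two tails, and then substitute $n\mapsto -n-k-1$ in the lower tail and $n\mapsto n+k$ in the upper tail. The two shifted tails combine into
\begin{equation*}
\sum_{n=0}^{\infty}(6n+6k+5)q^{3n^2+6nk+5n+3k^2+5k+2}-\sum_{n=0}^{\infty}(6n+6k+1)q^{3n^2+6nk+n+3k^2+k},
\end{equation*}
and via the splittings $(6n+6k+5)-(6n+6k+1)=4$ and $(6n+6k+1)=(6n+6)+(6k-5)$ I arrive at
\begin{equation*}
\sum_{i=0}^{\infty}\sum_{n=-k}^{k-1}h_i(n)q^i = 1+\frac{q^{3k^2+k}}{(q^2;q^2)_\infty^3(q^2;q^4)_\infty^2}(A'+B'),
\end{equation*}
with
\begin{align*}
A'&:=\sum_{n=0}^{\infty}(6n+6)q^{3n^2+6nk+n}\bigl(q^{4n+4k+2}-1\bigr)+4\sum_{n=0}^{\infty}q^{3n^2+6nk+5n+4k+2},\\
B'&:=\sum_{n=0}^{\infty}(6k-5)q^{3n^2+6nk+n}\bigl(q^{4n+4k+2}-1\bigr).
\end{align*}
Since $3k^2+k\ge 4$ for $k\ge 1$, the free $+1$ affects only the $q^0$ coefficient, and the claim for $i\ge 1$ reduces to showing that each of $A'/[(q^2;q^2)_\infty^3(q^2;q^4)_\infty^2]$ and $B'/[(q^2;q^2)_\infty^3(q^2;q^4)_\infty^2]$ has non-positive coefficients.

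The $B'$ quotient is handled directly. For $k\ge 1$ we have $6k-5\ge 1>0$, and the factorization $(q^2;q^4)_\infty=(q^2;q^4)_{n+k}(1-q^{4n+4k+2})(q^{4n+4k+6};q^4)_\infty$ gives
\begin{equation*}
\frac{B'}{(q^2;q^2)_{\infty}^{3}(q^2;q^4)_{\infty}^{2}}=-(6k-5)\sum_{n=0}^{\infty}\frac{q^{3n^2+6nk+n}}{(q^2;q^2)_{\infty}^{3}(q^2;q^4)_{\infty}(q^2;q^4)_{n+k}(q^{4n+4k+6};q^4)_{\infty}},
\end{equation*}
which manifestly has non-positive coefficients. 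For the $A'$ quotient, the plan is to invoke a Ramanujan-type identity playing the role of \eqref{raidentity3n+1} for the weight $(6n+1)q^{3n^2+n}$, multiply by an appropriate power of the free parameter $a$, differentiate once with respect to $a$ to produce the linear-in-$n$ coefficient, and then specialize $a$ to a suitable power of $q$ (depending on $k$). This should express $-A'$ as a finite sum of pieces $A_1'+A_2'+\cdots$, and each quotient $A_j'/[(q^2;q^2)_\infty^3(q^2;q^4)_\infty^2]$ is then shown to have non-negative coefficients by the same Pochhammer/geometric-series manipulations as in Lemmas \ref{lem3n+1-3}--\ref{lem3n+1-5}: every residual factor $1-q^\alpha$ is absorbed either into the telescoping expansion $1-q^\alpha=(1-q^\beta)(1+q^\beta+\cdots+q^{\alpha-\beta})$ or into the missing entry of a suitable Pochhammer symbol in the denominator.

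The main obstacle is the $A'$ step: identifying the correct Ramanujan-type identity analogous to \eqref{raidentity3n+1} whose quintuple-product side carries $(6n+1)q^{3n^2+n}$, and then choosing the particular factorization of $(q^2;q^2)_\infty^3(q^2;q^4)_\infty^2$ so that each piece of the decomposition of $-A'$ displays non-negative coefficients after absorption. The denominator admits several equivalent factorizations, and the delicate bookkeeping is to decide which Pochhammer symbol each residual factor $1-q^\alpha$ should be absorbed into. Once the identity and the absorptions are in place, the argument is a close parallel to Section \ref{sectionthm-3n+1}; combining the non-positivity contributions from $A'$ and $B'$ yields Lemma \ref{lem6n+1-5}.
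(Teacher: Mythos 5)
Your reduction is sound and runs parallel to the paper's: the tail-shift, the prefactor extraction, and the splitting of $(6n+6k+5)q^{E_2+4n+4k+2}-(6n+6k+1)q^{E_2}$ into a $(q^{4n+4k+2}-1)$ piece plus $4q^{E_1}$ reproduce the paper's Lemma \ref{lem6n+1-1} (you pull out $q^{3k^2+k}$ and split the coefficient as $(6n+6)+(6k-5)$ where the paper pulls out $q^{3k^2-k}$ and uses $(6n+4)+(6k-3)$; these are equivalent bookkeeping choices). Your treatment of $B'$ is correct and matches the paper's treatment of $D$.

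However, the $A'$ step is a genuine gap, and you say so yourself: you never identify the Ramanujan-type identity, never produce the decomposition of $-A'$, and never verify the positivity of the resulting quotients. This is precisely where the paper does its real work (Lemmas \ref{lem6n+1-2} and \ref{lem6n+1-c1}). The missing identity is Entry 9.4.1 of Ramanujan's lost notebook,
\begin{equation*}
\sum_{n=0}^{\infty}\frac{(-1)^na^{2n}q^{n(n+1)/2}}{(-aq;q)_n}=\sum_{n=0}^{\infty}a^{3n}q^{n(3n+1)/2}(1-a^2q^{2n+1}),
\end{equation*}
which after $q\mapsto q^2$, multiplication by $a^2$, differentiation in $a$, and the substitution $a=q^{2k}$ yields $\sum_{n\ge 0}(6n+4)q^{6kn+2k+3n^2+n}(1-q^{4k+4n+2})-4\sum_{n\ge 0}q^{6kn+6k+3n^2+5n+2}$ on one side; the paper then splits the series side as $-C=2(C_1+C_2+C_3+C_4)$ and checks each quotient separately. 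Note also that because the differentiated identity naturally produces the weight $6n+4$ rather than your $6n+6$, your $A'$ is not the direct specialization: with your split you would be left with an additional term $2\sum_{n\ge 0}q^{3n^2+6nk+n}(q^{4n+4k+2}-1)$, which is harmless (its quotient is non-positive by the same argument as for $B'$) but which you have not accounted for. As written, the proposal establishes the easy half of the lemma and only gestures at the hard half.
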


Define
\[
C:=\sum_{n=0}^{\infty}(6n+4)q^{3n^2+(6k+1)n+2k}(q^{4n+4k+2}-1)+4\sum_{n=0}^{\infty}q^{3n^2+(6k+1)n+4n+6k+2}
\]
and
\[
D:=\sum_{n=0}^{\infty}(6k-3)q^{3n^2+(6k+1)n+2k}(q^{4n+4k+2}-1).
\]
To prove Lemma \ref{lem6n+1-5}, we first transform $\sum_{i=0}^{\infty}\sum_{n=-k}^{k-1}h_i(n)q^i$ as in the following lemma.

\begin{lem}\label{lem6n+1-1}
	For $k \ge 1$, we have
	\begin{equation}
	\sum_{i=0}^{\infty}\sum_{n=-k}^{k-1}h_i(n)q^i=1+\frac{1}{(q^2;q^2)_{\infty}^{3}(q^2;q^4)_{\infty}^{2}}(C+D).
	\end{equation}
\end{lem}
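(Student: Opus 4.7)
I would follow the same template as the proof of Lemma \ref{lem3n+1-1}, substituting the quintuple-product consequence \eqref{quituple6n+1} for \eqref{quituple3n+1}. After factoring out the common denominator $(q^2;q^2)_\infty^{3}(q^2;q^4)_\infty^{2}$ from both sides, the task reduces to evaluating $\sum_{n=-k}^{k-1}(6n+1)q^{3n^2+n}$. Writing this as the full bilateral sum over $\mathbb{Z}$ minus the two infinite tails $\sum_{n\le-k-1}$ and $\sum_{n\ge k}$, the bilateral sum equals $(q^2;q^2)_\infty^{3}(q^2;q^4)_\infty^{2}$ by \eqref{quituple6n+1}, cancels the denominator, and produces the leading $1$.

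Next I would re-index the two tails by the substitutions $n=-m-k-1$ and $n=m+k$ respectively (with $m\ge 0$). A direct computation gives
\[-\sum_{n=-\infty}^{-k-1}(6n+1)q^{3n^2+n}=\sum_{m=0}^{\infty}(6m+6k+5)\,q^{3m^2+(6k+5)m+3k^2+5k+2}\]
and
\[\sum_{n=k}^{\infty}(6n+1)q^{3n^2+n}=\sum_{m=0}^{\infty}(6m+6k+1)\,q^{3m^2+(6k+1)m+3k^2+k}.\]
Both families share the common factor $q^{3k^2-k}$, whose extraction leaves residual exponents $3m^2+(6k+5)m+6k+2$ and $3m^2+(6k+1)m+2k$ --- precisely the exponents appearing inside $C$ and $D$.

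The remaining step is purely algebraic. Using the splittings $6m+6k+5=(6m+4)+4+(6k-3)$ and $6m+6k+1=(6m+4)+(6k-3)$, together with the factorisation $q^{3m^2+(6k+5)m+6k+2}-q^{3m^2+(6k+1)m+2k}=q^{3m^2+(6k+1)m+2k}(q^{4m+4k+2}-1)$, the $(6m+4)$- and $(6k-3)$-contributions collapse into the telescoping parts of $C$ and $D$, while the isolated $+4$ produces the extra summand $4\sum q^{3m^2+(6k+1)m+4m+6k+2}$ in $C$. Reassembling the pieces produces the claimed identity (with the overall $q^{3k^2-k}$ prefactor emerging naturally from the re-indexing --- the analogue of the $q^{3k^2-2k}$ factor that appears in Lemma \ref{lem3n+1-1}).

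I do not foresee any serious obstacle: the proof is a direct bookkeeping exercise parallel to Lemma \ref{lem3n+1-1}. The only points requiring care are the sign flip when re-indexing the negative tail (where $6n+1=-(6m+6k+5)$) and the consistent tracking of the $q^{3k^2-k}$ prefactor across both summands, which must match perfectly so that the same power of $q$ factors out of each family before the $C$/$D$ decomposition is applied.
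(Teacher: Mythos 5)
Your argument is correct and is essentially identical to the paper's own proof: the same re-indexings $n\mapsto -m-k-1$ and $n\mapsto m+k$, the same extraction of $q^{3k^2-k}$, and the same splittings $6m+6k+5=(6m+4)+4+(6k-3)$ and $6m+6k+1=(6m+4)+(6k-3)$ yielding $C$ and $D$. Note that the $q^{3k^2-k}$ prefactor you (correctly) carry through also appears in the final line of the paper's proof but is missing from the printed statement of the lemma, which appears to be a typo in the paper rather than a defect in your derivation.
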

\begin{proof}
	Using \eqref{quituple6n+1},	we have
	\begin{small}
		\begin{align}\label{eq-quint-1.11-1}
			&\sum_{i=0}^{\infty}\sum_{n=-k}^{k-1}h_i(n)q^i\nonumber\\
			=&\frac{1}{(q^2;q^2)_{\infty}^{3}(q^2;q^4)_{\infty}^{2}}\sum_{n=-k}^{k-1}(6n+1)q^{3n^2+n}\notag\\
			=&\frac{1}{(q^2;q^2)_{\infty}^{3}(q^2;q^4)_{\infty}^{2}}\left(\sum_{n=-\infty}^{\infty}(6n+1)q^{3n^2+n}-\sum_{n=-\infty}^{-k-1}(6n+1)q^{3n^2+n}-\sum_{n=k}^{\infty}(6n+1)q^{3n^2+n}\right)\notag\\
			=&1-\frac{1}{(q^2;q^2)_{\infty}^{3}(q^2;q^4)_{\infty}^{2}}\left(\sum_{n=-\infty}^{-k-1}(6n+1)q^{3n^2+n}+\sum_{n=k}^{\infty}(6n+1)q^{3n^2+n}\right).
		\end{align}
	\end{small}
	Changing the index $n$ in $\sum_{n=-\infty}^{-k-1}(6n+1)q^{3n^2+n}$	into $-n-k-1$, we deduce that
	\begin{equation}\notag
		\sum_{n=-\infty}^{-k-1}(6n+1)q^{3n^2+n}=-\sum_{n=0}^{\infty}(6(n+k)+5)q^{3n^2+3k^2+6nk+5n+5k+2}.
	\end{equation}
	Similarly, replacing the index  $n$ in $\sum_{n=k}^{\infty}(6n+1)q^{3n^2+n}$ into $n+k$, we have	
	\begin{equation}\notag
		\sum_{n=k}^{\infty}(6n+1)q^{3n^2+n}=\sum_{n=0}^{\infty}(6(n+k)+1)q^{3n^2+6nk+3k^2+n+k}.
	\end{equation}
	Plugging into \eqref{eq-quint-1.11-1}, we have	
	\begin{align*}
		&\sum_{i=0}^{\infty}\sum_{n=-k}^{k-1}h_i(n)q^i\\
		=&1+\frac{1}{(q^2;q^2)_{\infty}^{3}(q^2;q^4)_{\infty}^{2}}\left(\sum_{n=0}^{\infty}(6(n+k)+5)q^{3n^2+3k^2+6nk+5n+5k+2}\right.\\
		&\left.-\sum_{n=0}^{\infty}(6(n+k)+1)q^{3n^2+6nk+3k^2+n+k}\right)\\
		=&1+\frac{q^{3k^2-k}}{(q^2;q^2)_{\infty}^{3}(q^2;q^4)_{\infty}^{2}}\left(\sum_{n=0}^{\infty}(6n+4)q^{3n^2+(6k+1)n+2k}(q^{4n+4k+2}-1)\right.\\
		&\left.+4\sum_{n=0}^{\infty}q^{3n^2+(6k+1)n+4n+6k+2}+\sum_{n=0}^{\infty}(6k-3)q^{3n^2+(6k+1)n+2k}(q^{4n+4k+2}-1)\right)\\
		=&1+\frac{q^{3k^2-k}}{(q^2;q^2)_{\infty}^{3}(q^2;q^4)_{\infty}^{2}}\left(C+D\right).
	\end{align*}
\end{proof}
We now prove that $C/(q^2;q^2)_{\infty}^{3}(q^2;q^4)_{\infty}^{2}$ has non-positive coefficients. To this end, we  rewrite $C$ as $-2C_1-2C_2-2C_3-2C_4$ in the following lemma, 	where
\[
\begin{aligned}
	C_{1}:=&\sum_{j=0}^{\infty}\frac{q^{8kj+8k+4j^2+6j+2}}{(-q^{2k+2};q^2)_{2j+2}}\left(1-q^{4k+4j+4}\right)\sum_{i=1}^{2j+1}\frac{q^{2i}}{1+q^{2i+2k}},\\
	C_{2}:=&\sum_{j=0}^{\infty}\frac{q^{8kj+8k+4j^2+6j+2}}{(-q^{2k+2};q^2)_{2j+2}}q^{2k+4j+4}\sum_{i=2}^{2j+1}\frac{q^{2i}}{1+q^{2i+2k}},\\
	C_{3}:=&\sum_{j=0}^{\infty}\frac{q^{8kj+8k+4j^2+6j+2}}{(-q^{2k+2};q^2)_{2j+2}}q^{2k+4j+4}\left(\frac{q^{2}}{1+q^{2k+2}}-\frac{q^{2k+4j+4}}{1+q^{2k+4j+4}}\right),\\
	C_{4}:=&\sum_{n=0}^{\infty}\frac{(-1)^n(2n+2)q^{4kn+2k+n^2+n}}{(-q^{2k+2};q^2)_n}.
\end{aligned}
\]

\begin{lem}\label{lem6n+1-2}
	For $k\ge 1$, 	
	\begin{equation}
		-C=2(C_1+C_2+C_3+C_4).
	\end{equation}
\end{lem}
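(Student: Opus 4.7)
My plan is to mirror the proof of Lemma \ref{lem3n+1-2}. The idea is to start from a Ramanujan-type identity analogous to \eqref{raidentity3n+1} but adapted to the $6n+1$ series \eqref{quituple6n+1} (the natural candidate being an identity of the shape
$\sum_{n\ge 0}(-1)^n a^{2n+2}q^{n^2+n}/(-aq^2;q^2)_n \cdot (\text{minor factor})
= \sum_{n\ge 0} a^{3n+2}q^{3n^2+n}(1-a^2q^{4n+2})$),
multiply by an appropriate power of $a$, differentiate once with respect to $a$, and then substitute $a = q^{2k}$. I expect this procedure to produce $-C$ on the theta-function side and $2(C_1+C_2+C_3+C_4)$ on the Eulerian side. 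The factor of $2$ in the conclusion is forced by the identity
$\frac{d}{da}\bigl[a^{3n+2}q^{3n^2+n}(1-a^2q^{4n+2})\bigr] = (3n+2)a^{3n+1}q^{3n^2+n}(1-a^2q^{4n+2}) - 2a^{3n+3}q^{3n^2+5n+2}$,
whose doubled specialization at $a=q^{2k}$ matches $-C$ term by term.

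On the Eulerian side, differentiation splits into two kinds of contributions. Differentiating the factor $a^{2n+2}$ in the numerator yields $(-1)^n(2n+2)a^{2n+1}q^{n^2+n}/(-aq^2;q^2)_n$, which at $a=q^{2k}$ is exactly $C_4$. Differentiating the denominator $1/(-aq^2;q^2)_m$ brings down a factor $-\sum_{i=0}^{m-1}q^{2i+2}/(1+aq^{2i+2})$, which after reindexing $i\mapsto i+1$ produces exactly the inner sums $\sum_{i=1}^{n}q^{2i}/(1+q^{2i+2k})$ that appear in $C_1$ and $C_2$ after substitution $a=q^{2k}$.

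To rearrange the denominator contributions into $-(C_1+C_2+C_3)$, I would split the outer summation over $n$ by parity, pair the consecutive $n=2j+1$ and $n=2j+2$ terms (the latter contributing an extra factor $1/(1+q^{4j+2k+6})$ peeled off from the denominator), and exploit the alternating sign $(-1)^n$ to combine them over the common denominator $(-q^{2k+2};q^2)_{2j+2}$. Factoring yields three pieces: a part multiplied by $(1-q^{4k+4j+4})$ (giving $C_1$), a part where the inner sum loses its $i=0$ and $i=1$ terms and thus runs over $i\ge 2$ (giving $C_2$), and a residual boundary term involving the two explicit fractions $q^2/(1+q^{2k+2})$ and $q^{2k+4j+4}/(1+q^{2k+4j+4})$ (giving $C_3$).

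The main obstacle will be pinning down the exact Ramanujan-type identity and then doing the index bookkeeping cleanly: tracking the quadratic and linear exponents so that the constants $4j^2+6j+2$, $8kj+8k$ and $2k+4j+4$ in $C_1,C_2,C_3$ all appear with the right coefficients, and handling the $n=0$ boundary term (which is not produced by the pairing and must be absorbed directly into the closed-form fractions of $C_3$ and into the $n=0$ contribution of $C_4$).
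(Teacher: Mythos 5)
Your proposal matches the paper's proof essentially step for step: the paper starts from Entry 9.4.1 of Ramanujan's lost notebook (with $q\mapsto q^2$ and multiplied by $a^2$, giving exactly the identity you guessed with no extra factor), differentiates in $a$, sets $a=q^{2k}$, doubles both sides to hit $-C$ term by term, reads off $2C_4$ from the numerator derivative, and pairs the $n=2j+1$ and $n=2j+2$ terms of the denominator-derivative sum over the common denominator $(-q^{2k+2};q^2)_{2j+2}$ to produce $2(C_1+C_2+C_3)$ exactly as you describe. The only slips are cosmetic: the inner sum starts at $i=1$, so only the $i=1$ term is peeled off to form $C_3$ (there is no $i=0$ term), and no leftover $n=0$ boundary term arises since the pairing exhausts all $n\ge 1$ while the $n=0$ term sits entirely inside $C_4$.
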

\begin{proof}
	Recall the following identity due to \cite[Entry 9.4.1]{andrews2005ramanujan},
	\[
	\sum_{n=0}^{\infty}\frac{(-1)^na^{2n}q^{n(n+1)/2}}{(-aq;q)_n}=\sum_{n=0}^{\infty}a^{3n}q^{n(3n+1)/2}(1-a^2q^{2n+1}).
	\]
	Replacing $q$ by $q^2$ and multiply $a^2$ in both sides of  the above equation, we  obtain
	\[
	\sum_{n=0}^{\infty}\frac{(-1)^na^{2n+2}q^{n(n+1)}}{(-aq^2;q^2)_n}=\sum_{n=0}^{\infty}a^{3n+2}q^{n(3n+1)}(1-a^2q^{4n+2}).
	\]
	Differentiate both sides with respect to $a$ to get
	\begin{align}
		&\sum_{n=1}^{\infty}\frac{(-1)^{n+1}a^{2n+2}q^{n^2+n}}{(-aq^2;q^2)_{n}}\sum_{i=1}^{n}\frac{q^{2i}}{1+aq^{2i}}+\sum_{n=0}^{\infty}\frac{(-1)^n(2n+2)a^{2n+1}q^{n^2+n}}{(-aq^2;q^2)_{n}}\nonumber\\
		=&\sum_{n=0}^{\infty}(3n+2)a^{3n+1}q^{3n^2+n}(1-a^2q^{4n+2})-2\sum_{n=0}^{\infty}a^{3n+3}q^{3n^2+5n+2}.\label{qiudaohou}
	\end{align}
	Replacing $a$ by $q^{2k}$ in \eqref{qiudaohou} and multiplying both sides by $2$, we get
	\begin{align}
		-C=&\sum_{n=0}^{\infty}(6n+4)q^{6kn+2k+3n^2+n}(1-q^{4k+4n+2})-4\sum_{n=0}^{\infty}q^{6kn+6k+3n^2+5n+2}\nonumber\\
		=&2\sum_{n=1}^{\infty}\frac{(-1)^{n+1}q^{4kn+4k+n^2+n}}{(-q^{2k+2};q^2)_n}\sum_{i=1}^{n}\frac{q^{2i}}{1+q^{2i+2k}}+2\sum_{n=0}^{\infty}\frac{(-1)^n(2n+2)q^{4kn+2k+n^2+n}}{(-q^{2k+2};q^2)_n}.\label{eqC1}
	\end{align}
	Noticed that the second summand in \eqref{eqC1} is $2C_4$ and the first summand in \eqref{eqC1} can be further transformed as follows.
	\begin{align}
		&2\sum_{n=1}^{\infty}\frac{(-1)^{n+1}q^{4kn+4k+n^2+n}}{(-q^{2k+2};q^2)_n}\sum_{i=1}^{n}\frac{q^{2i}}{1+q^{2i+2k}}\nonumber\\
		=&2\sum_{j=0}^{\infty}\left(\frac{q^{4k(2j+1)+4k+(2j+1)^2+(2j+1)}}{(-q^{2k+2};q^2)_{2j+1}}\sum_{i=1}^{2j+1}\frac{q^{2i}}{1+q^{2i+2k}}\right.\nonumber\\
		&\left.-\frac{q^{4k(2j+2)+4k+(2j+2)^2+(2j+2)}}{(-q^{2k+2};q^2)_{2j+2}}\sum_{i=1}^{2j+2}\frac{q^{2i}}{1+q^{2i+2k}}\right)\nonumber\\
		=&2\sum_{j=0}^{\infty}\frac{q^{8kj+8k+4j^2+6j+2}}{(-q^{2k+2};q^2)_{2j+2}}\left(\left(1+q^{2k+4j+4}\right)\sum_{i=1}^{2j+1}\frac{q^{2i}}{1+q^{2i+2k}}\right.\nonumber\\
		&\left.-q^{4k+4j+4}\sum_{i=1}^{2j+1}\frac{q^{2i}}{1+q^{2i+2k}}-\frac{q^{8j+4k+8}}{1+q^{4j+2k+4}}\right)\nonumber\\
		=&2\sum_{j=0}^{\infty}\frac{q^{8kj+8k+4j^2+6j+2}}{(-q^{2k+2};q^2)_{2j+2}}\left(\left(1-q^{4k+4j+4}\right)\sum_{i=1}^{2j+1}\frac{q^{2i}}{1+q^{2i+2k}}\right.\nonumber\\
		&\left.+q^{2k+4j+4}\sum_{i=1}^{2j+1}\frac{q^{2i}}{1+q^{2i+2k}}-\frac{q^{8j+4k+8}}{1+q^{4j+2k+4}}\right)\nonumber\\	=&2\sum_{j=0}^{\infty}\frac{q^{8kj+8k+4j^2+6j+2}}{(-q^{2k+2};q^2)_{2j+2}}\left((1-q^{4k+4j+4})\sum_{i=1}^{2j+1}\frac{q^{2i}}{1+q^{2i+2k}}\right.\nonumber\\
		&\left.+q^{2k+4j+4}\sum_{i=2}^{2j+1}\frac{q^{2i}}{1+q^{2i+2k}}	+q^{2k+4j+4}\left(\frac{q^{2}}{1+q^{2k+2}}-\frac{q^{2k+4j+4}}{1+q^{2k+4j+4}}\right)\right)\nonumber\\
		=&2(C_1+C_2+C_3).\label{c1c2c3}
		\end{align}
		Combining \eqref{eqC1} and \eqref{c1c2c3}, we deduce that $-C=2(C_1+C_2+C_3+C_4)$.
\end{proof}
Next we prove that
\[
\frac{C_1}{(q^2;q^2)_{\infty}^{3}(q^2;q^4)_{\infty}^{2}},\quad
\frac{C_2}{(q^2;q^2)_{\infty}^{3}(q^2;q^4)_{\infty}^{2}},\quad
\frac{C_3}{(q^2;q^2)_{\infty}^{3}(q^2;q^4)_{\infty}^{2}},\quad
\frac{C_4}{(q^2;q^2)_{\infty}^{3}(q^2;q^4)_{\infty}^{2}}\]
all have non-negative coefficients in the following lemma.
Together with Lemma \ref{lem6n+1-2}, we will deduce the non-positivity of $\dfrac{C}{(q^2;q^2)_{\infty}^{3}(q^2;q^4)_{\infty}^{2}}$.

\begin{lem}\label{lem6n+1-c1}
For $k\ge 1$,\[
\frac{C_1}{(q^2;q^2)_{\infty}^{3}(q^2;q^4)_{\infty}^{2}},\quad
\frac{C_2}{(q^2;q^2)_{\infty}^{3}(q^2;q^4)_{\infty}^{2}},\quad
\frac{C_3}{(q^2;q^2)_{\infty}^{3}(q^2;q^4)_{\infty}^{2}},\quad
\frac{C_4}{(q^2;q^2)_{\infty}^{3}(q^2;q^4)_{\infty}^{2}}\]
all	have non-negative coefficients.
\end{lem}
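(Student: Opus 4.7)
The plan is to follow, one-for-one, the strategy used for $A_1,A_2,A_3$ in Lemmas \ref{lem3n+1-3}--\ref{lem3n+1-5}. The key observation is that the denominator $(q^2;q^2)_{\infty}^{3}(q^2;q^4)_{\infty}^{2}=\prod_{m\ge 1}(1-q^{2m})^3\prod_{m\ge 1}(1-q^{4m-2})^2$ contains three copies of every factor $(1-q^{2m})$, which leaves ample room to absorb all of the problematic pieces that appear. The two elementary absorption identities we use repeatedly are
\[
\frac{1}{(1+q^{2\ell})(q^2;q^2)_{\infty}}=\frac{1}{(1-q^{4\ell})(q^2;q^2)_{\ell-1}(q^{2\ell+2};q^2)_{\infty}},
\]
which converts each $1/(1+q^{\text{even}})$ into a series with non-negative coefficients, together with the cancellation $\dfrac{1-q^{2N}}{(q^2;q^2)_{\infty}}=\dfrac{1}{(q^2;q^2)_{N-1}(q^{2N+2};q^2)_{\infty}}$, which clears factors such as $(1-q^{4k+4j+4})$ or $(1-q^{2k})$ from the numerator. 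Finite Pochhammer denominators $1/(-q^{2k+2};q^2)_{2j+2}$ are handled by iterating the first identity factor by factor, or equivalently by writing $1/(-q^{2k+2};q^2)_{\infty}=(q^{2k+2};q^2)_{\infty}/(q^{4k+4};q^4)_{\infty}$ and absorbing $(q^{4k+4};q^4)_{\infty}$ into the $(q^4;q^4)_{\infty}$ hidden inside $(q^2;q^2)_{\infty}$.

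For $C_1$, $C_2$, $C_3$ this is essentially direct. In $C_1$ the factor $(1-q^{4k+4j+4})$ cancels against one copy of $(q^2;q^2)_{\infty}$, each inner term $q^{2i}/(1+q^{2i+2k})$ is rewritten by the first identity using a second copy, and $1/(-q^{2k+2};q^2)_{2j+2}$ is dealt with using a third copy. In $C_2$ the same treatment works, the only difference being the absence of a $(1-q^{4k+4j+4})$ factor in the numerator, and the leading monomial $q^{8kj+8k+4j^2+6j+2}\cdot q^{2k+4j+4}\cdot q^{2i}$ is purely positive. For $C_3$ the bracket is first combined over a common denominator as
\[
\frac{q^{2}}{1+q^{2k+2}}-\frac{q^{2k+4j+4}}{1+q^{2k+4j+4}}=\frac{q^{2}(1-q^{2k+4j+2})+q^{2k+4j+6}(1-q^{2k})}{(1+q^{2k+2})(1+q^{2k+4j+4})},
\]
and both $(1-q^{2k+4j+2})$ and $(1-q^{2k})$ (recall $k\ge 1$) cancel against $(q^2;q^2)_{\infty}$, exactly in the spirit of Lemma \ref{lem3n+1-5}.

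The case $C_4$ requires the one extra step, already present in the proof of Lemma \ref{lem3n+1-4}, of pairing consecutive alternating-sign terms $n=2m$ and $n=2m+1$. Their sum equals
\[
\frac{q^{8km+2k+4m^2+2m}}{(-q^{2k+2};q^2)_{2m+1}}\Bigl[(4m+2)(1-q^{4k+4m+2})+4m\,q^{2k+4m+2}+2q^{2k+4m+2}(1-q^{2k})\Bigr],
\]
obtained by the routine rearrangement $(4m+2)(1+q^{2k+4m+2})-(4m+4)q^{4k+4m+2}=(4m+2)(1-q^{4k+4m+2})+4m\,q^{2k+4m+2}+2q^{2k+4m+2}(1-q^{2k})$. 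Since $k\ge 1$, the two factors $(1-q^{4k+4m+2})$ and $(1-q^{2k})$ are absorbed into $(q^2;q^2)_{\infty}$, while $4m\,q^{2k+4m+2}$ is a purely positive monomial. The finite factor $1/(-q^{2k+2};q^2)_{2m+1}$ is cleared as above. The entire argument is conceptually a routine transcription of the $A_i$ proofs; the main obstacle is the combinatorial bookkeeping required to confirm that no single copy of $(q^2;q^2)_{\infty}$ or $(q^2;q^4)_{\infty}$ is asked to cancel more factors than it actually supplies, which is why each $C_i$ must be verified separately rather than in a single unified display.
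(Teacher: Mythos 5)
Your proposal is correct and follows essentially the same route as the paper: the same three absorption identities (clearing $(-q^{2k+2};q^2)_{2j+2}$ via $(q^{2k+2};q^2)_\infty/(q^{4k+4};q^4)_\infty$, cancelling $(1-q^{2N})$ and $1/(1+q^{2\ell})$ against copies of $(q^2;q^2)_\infty$), the same common-denominator rewriting of the bracket in $C_3$, and the same pairing of consecutive terms for $C_4$. Your split of the $C_4$ bracket as $(4m+2)(1-q^{4k+4m+2})+4m\,q^{2k+4m+2}+2q^{2k+4m+2}(1-q^{2k})$ is an algebraically equivalent variant of the paper's $4j+2(1-q^{4k+4j+2})+(4j+2)q^{2k+4j+2}(1-q^{2k})$, and both yield manifestly non-negative pieces.
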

\begin{proof}
	First,
	\begin{align}\label{a11-0} &\frac{C_{1}}{(q^2;q^2)_{\infty}^{3}(q^2;q^4)_{\infty}^{2}}\notag\\ =&\frac{1}{(q^2;q^2)_{\infty}^{3}(q^2;q^4)_{\infty}^{2}}\sum_{j=0}^{\infty}\frac{q^{8kj+8k+4j^2+6j+2}}{(-q^{2k+2};q^2)_{2j+2}}\left(1-q^{4k+4j+4}\right)\sum_{i=1}^{2j+1}\frac{q^{2i}}{1+q^{2i+2k}}\nonumber\\		=&\frac{1}{(q^2;q^4)_{\infty}^{2}}\sum_{j=0}^{\infty}\Big(\frac{1}{(q^2;q^2)_{\infty}(-q^{2k+2};q^2)_{2j+2}}\cdot \frac{1-q^{4k+4j+4}}{(q^2;q^2)_{\infty}}\cdot\notag\\ &\sum_{i=1}^{2j+1}\frac{q^{2i}}{(1+q^{2i+2k})(q^2;q^2)_{\infty}}\cdot {q^{8kj+8k+4j^2+6j+2}}\Big).
	\end{align}
	Note that
	\begin{align}\label{a11-1}
		\frac{1}{(q^2;q^2)_{\infty}(-q^{2k+2};q^2)_{2j+2}}=\frac{1}{(q^2;q^2)_{\infty}}\frac{(-q^{2k+4j+6};q^2)_{\infty}}{(-q^{2k+2};q^2)_{\infty}}=\frac{1}{(q^2;q^2)_{k}}\frac{(-q^{2k+4j+6};q^2)_{\infty}}{(q^{4k+4};q^4)_{\infty}}
	\end{align}
	has non-negative coefficients. Moreover,
	\begin{equation}\label{a11-2}\frac{(1-q^{4k+4j+4})}{(q^2;q^2)_{\infty}}=\frac{1+q^2+\cdots+q^{4k+4j+2}}{(q^4;q^2)_\infty}\end{equation}
	also has non-negative coefficients. Furthermore,
	\begin{equation}\label{a11-3}
		\frac{q^{2i}}{(1+q^{2i+2k})(q^2;q^2)_{\infty}}=\frac{q^{2i}}{(1-q^{4i+4k})(q^2;q^2)_{i+k-1}(q^{2i+2k+2};q^2)_\infty}.
	\end{equation}
	Substituting \eqref{a11-1}, \eqref{a11-2} and \eqref{a11-3} into \eqref{a11-0}, we find that $C_{1}/{(q^2;q^2)_{\infty}^{3}(q^2;q^4)_{\infty}^{2}}$ has non-negative coefficients.
	
Second,	notice that
\begin{align}
	&\frac{C_2}{(q^2;q^2)_{\infty}^{3}(q^2;q^4)_{\infty}^{2}}\nonumber\\
	=&\frac{1}{(q^2;q^2)_{\infty}^{3}(q^2;q^4)_{\infty}^{2}}\sum_{j=0}^{\infty}\frac{q^{8kj+8k+4j^2+6j+2}}{(-q^{2k+2};q^2)_{2j+2}}q^{2k+4j+4}\sum_{i=2}^{2j+1}\frac{q^{2i}}{1+q^{2i+2k}}\nonumber\\
	=&\frac{1}{(q^2;q^2)_{\infty}(q^2;q^4)_{\infty}^{2}}\sum_{j=0}^{\infty}\frac{q^{8kj+10k+4j^2+10j+6}}{(q^2;q^2)_{\infty}(-q^{2k+2};q^2)_{2j+2}}\sum_{i=2}^{2j+1}\frac{q^{2i}}{(q^2;q^2)_{\infty}(1+q^{2i+2k})}.\label{eq6n+1c2}
	\end{align}
Using \eqref{a11-1} and \eqref{a11-3}, we see that \eqref{eq6n+1c2} has non-negative coefficients. 
	
Third,
		\begin{equation}\notag
			\begin{aligned}
				C_{3}=&\sum_{j=0}^{\infty}\frac{q^{8kj+10k+4j^2+10j+6}}{(-q^{2k+2};q^2)_{2j+2}}\left(\frac{q^2}{1+q^{2k+2}}-\frac{q^{2k+4j+4}}{1+q^{2k+4j+4}}\right)\\
				=&\sum_{j=0}^{\infty}\frac{q^{8kj+10k+4j^2+10j+6}}{(-q^{2k+2};q^2)_{2j+2}}\cdot\frac{q^2+q^{2k+4j+6}-q^{2k+4j+4}-q^{4k+4j+6}}{(1+q^{2k+2})(1+q^{2k+4j+4})}\\
				=&\sum_{j=0}^{\infty}\frac{q^{8kj+10k+4j^2+10j+8}}{(-q^{2k+2};q^2)_{2j+2}}\cdot\frac{\left(1-q^{2k+4j+2}\right)+q^{2k+4j+4}\left(1-q^{2k}\right) } {(1+q^{2k+2})(1+q^{2k+4j+4})},
			\end{aligned}
		\end{equation}
		thus
		\begin{align}\label{a13-0}
			&\frac{C_{3}}{(q^2;q^2)_{\infty}^{3}(q^2;q^4)_{\infty}^{2}}\nonumber\\
			=&\frac{1}{(q^2;q^2)_{\infty}^{3}(q^2;q^4)_{\infty}^{2}}\sum_{j=0}^{\infty}\frac{q^{8kj+10k+4j^2+10j+8}}{(-q^{2k+2};q^2)_{2j+2}}\cdot\frac{\left(1-q^{2k+4j+2}\right)+q^{2k+4j+4}\left(1-q^{2k}\right) } {(1+q^{2k+2})(1+q^{2k+4j+4})}\nonumber\\
			=&\frac{1}{(q^2;q^4)_{\infty}^{2}}\sum_{j=0}^{\infty}\frac{q^{8kj+10k+4j^2+10j+8}}{(q^2;q^2)_\infty(-q^{2k+2};q^2)_{2j+2}}\cdot\frac{\left(1-q^{2k+4j+2}\right)+q^{2k+4j+4}\left(1-q^{2k}\right) } {(q^2;q^2)_\infty^2(1+q^{2k+2})(1+q^{2k+4j+4})}.
		\end{align}
		Note that
		\begin{align}\label{a13-1}
			&\frac{\left(1-q^{2k+4j+2}\right)+q^{2k+4j+4}\left(1-q^{2k}\right)} {(q^2;q^2)_{\infty}^{2}(1+q^{2k+2})(1+q^{2k+4j+4})}\nonumber\\[3pt]
			=&\frac{\left(1-q^{2k+4j+2}\right)+q^{2k+4j+4}\left(1-q^{2k}\right)}{(q^2;q^2)_\infty}\cdot\frac{1}{(q^2;q^2)_\infty (1+q^{2k+2})(1+q^{2k+4j+4})}\nonumber\\[3pt]
			=&\left(\frac{1}{(q^2;q^2)_{k+2j}(q^{2k+4j+4};q^2)_\infty}+\frac{q^{2k+4j+4}}{(q^2;q^2)_{k-1}(q^{2k+2};q^2)_\infty}\right)\cdot\nonumber\\[3pt]
			&\frac{1}{(q^2;q^2)_k(1-q^{4k+4})(q^{2k+4};q^2)_{2j}(1-q^{4k+8j+8})(q^{2k+4j+6};q^2)_\infty},
		\end{align}
		which has non-negative coefficients. Combining \eqref{a11-1}, \eqref{a13-0} and \eqref{a13-1}, we derive that  $C_{3}/{(q^2;q^2)_{\infty}^{3}(q^2;q^4)_{\infty}^{2}}$ has non-negative coefficients.  

Finally, it is easy to see that
\begin{align}
	C_4=&\sum_{n=0}^{\infty}\frac{(-1)^{n}(2n+2)q^{4kn+2k+n^2+n}}{(-q^{2k+2};q^2)_{n}}\nonumber\\
	=&\sum_{j=0}^{\infty}\left(\frac{(4j+2)q^{8kj+2k+4j^2+2j}}{(-q^{2k+2};q^2)_{2j}}-\frac{(4j+4)q^{8kj+6k+4j^2+6j+2}}{(-q^{2k+2};q^2)_{2j+1}}\right)\nonumber\\
	=&\sum_{j=0}^{\infty}\frac{q^{8kj+2k+4j^2+2j}}{(-q^{2k+2};q^2)_{2j+1}}\left((4j+2)(1+q^{2k+4j+2})-(4j+4)q^{4k+4j+2}\right)\nonumber\\
	=&\sum_{j=0}^{\infty}\frac{q^{8kj+2k+4j^2+2j}}{(-q^{2k+2};q^2)_{2j+1}}\left((4j+2)(1+q^{2k+4j+2}-q^{4k+4j+2})-2q^{4k+4j+2}\right)\nonumber\\
	=&\sum_{j=0}^{\infty}\frac{q^{8kj+2k+4j^2+2j}}{(-q^{2k+2};q^2)_{2j+1}}\left((4j+2)+(4j+2)q^{2k+4j+2}(1-q^{2k})-2q^{4k+4j+2}\right)\nonumber\\
	=&\sum_{j=0}^{\infty}\frac{q^{8kj+2k+4j^2+2j}}{(-q^{2k+2};q^2)_{2j+1}}\left(4j+2(1-q^{4k+4j+2})+(4j+2)q^{2k+4j+2}(1-q^{2k})\right)\label{6n+1c2huajian}.
\end{align}
Thus 
\begin{align}
	&\frac{C_4}{(q^2;q^2)_{\infty}^{3}(q^2;q^4)_{\infty}^{2}}\nonumber\\
	=&\frac{1}{(q^2;q^2)_{\infty}^{3}(q^2;q^4)_{\infty}^{2}}\sum_{j=0}^{\infty}\frac{q^{8kj+2k+4j^2+2j}}{(-q^{2k+2};q^2)_{2j+1}}\left(4j+2(1-q^{4k+4j+2})+(4j+2)q^{2k+4j+2}(1-q^{2k})\right)\nonumber\\
	=&\frac{1}{(q^2;q^2)_{\infty}(q^2;q^4)_{\infty}^{2}}\sum_{j=0}^{\infty}\frac{q^{8kj+2k+4j^2+2j}}{(-q^{2k+2};q^2)_{2j+1}(q^2;q^2)_{\infty}}\cdot\nonumber\\
	&\left(\frac{4j}{(q^2;q^2)_{\infty}}+\frac{2(1-q^{4k+4j+2})}{(q^2;q^2)_{\infty}}+\frac{(4j+2)q^{2k+4j+2}(1-q^{2k})}{(q^2;q^2)_{\infty}}\right).\label{eq6n+1c4}
\end{align}
Note that 
\begin{equation}\label{661}
	\frac{1}{(-q^{2k+2};q^2)_{2j+1}(q^2;q^2)_{\infty}}=\frac{(-q^{2k+4j+4};q^2)_{\infty}}{(-q^{2k+2};q^2)_{\infty}(q^2;q^2)_{\infty}}=\frac{(-q^{2k+4j+4};q^2)_{\infty}}{(q^2;q^2)_{k}(q^{4k+4};q^4)_{
	\infty}}
\end{equation}
has non-negative coefficients. Moreover,
\begin{equation}\label{662}
	\frac{1-q^{4k+4j+2}}{(q^2;q^2)_{
	\infty}}=\frac{1+q^2+\cdots+q^{4k+4j}}{(q^4;q^2)_{\infty}}
\end{equation}
and 
\begin{equation}\label{663}
	\frac{1-q^{2k}}{(q^2;q^2)_{\infty}}=\frac{1+q^2+\cdots+q^{2k-2}}{(q^4;q^2)_{\infty}}
\end{equation}
are both have non-negative coefficients. Thus combining \eqref{661}, \eqref{662}, \eqref{663}, we conclude that $\dfrac{C_4}{(q^2;q^2)_{\infty}^{3}(q^2;q^4)_{\infty}^{2}}$ has non-negative coefficients. This completes the proof.
\end{proof}

We now prove Lemma \ref{lem6n+1-5}.

\noindent{\it Proof of Lemma \ref{lem6n+1-5}}.
	From Lemma \ref{lem6n+1-1}, it suffices to show both $\dfrac{C}{(q^2;q^2)_{\infty}^{3}(q^2;q^4)_{\infty}^{2}}$ and $\dfrac{D}{(q^2;q^2)_{\infty}^{3}(q^2;q^4)_{\infty}^{2}}$ have non-positive coefficients. Using Lemma \ref{lem6n+1-2} and Lemma \ref{lem6n+1-c1}, we know that $\dfrac{C}{(q^2;q^2)_{\infty}^{3}(q^2;q^4)_{\infty}^{2}}$ has non-positive coefficients.
	Now we confirm that $\dfrac{D}{(q^2;q^2)_{\infty}^{3}(q^2;q^4)_{\infty}^{2}}$ also has non-positive coefficients. Actually,
	\begin{align}
		\frac{D}{(q^2;q^2)_{\infty}^{3}(q^2;q^4)_{\infty}^{2}}	=&\frac{\sum_{n=0}^{\infty}(6k-3)q^{3n^2+(6k+1)n+2k}(q^{4n+4k+2}-1)}{(q^2;q^2)_{\infty}^{3}(q^2;q^4)_{\infty}^{2}}\nonumber\\
		=&-\frac{\sum_{n=0}^{\infty}(6k-3)q^{3n^2+(6k+1)n+2k}(1+q^2+\cdots+q^{4n+4k})}{(q^2;q^2)_{\infty}^{2}(q^4;q^2)_\infty(q^2;q^4)_{\infty}^{2}}	\end{align}
has non-positive coefficients.
	Thus we deduce that $\sum_{n=-k}^{k-1}h_i(n)\le 0$ for any $k\ge 1$ and $i\ge 1$.
\qed

We now in a position to prove the Theorem \ref{thm-3}.

{\noindent \it Proof of Theorem \ref{thm-3}.} Clearly from the definition of $h_i(n)$, we see that $h_{i}(n)\ge 0$ if $n\ge 0$ and $h_i(n)\le 0$ if $n<0$. In addition, from Theorem \ref{thmchan2016truncated6n+1} we know $\sum_{n=-k}^{k}h_i(n)\ge 0$ for any $i\ge 0$ and $k\ge 1$.  Moreover, from Lemma \ref{lem6n+1-5} we see that $\sum_{n=-k}^{k-1}h_i(n)\le 0$ for any $i\ge 1$ and $k\ge 1$. Thus by Lemma \ref{lemqn}, we deduce that $\Sg(a+b)\sum_{a}^{b}h_i(n)\ge 0$ for any $a,b\in \mathbb{Z}$ and $i\ge 1$, which implies that the coefficients of $q^i$ in \eqref{eqatob} are non-negative for any $i\ge 1$. This completes the proof.
\qed

We conclude this paper by proving Corollary \ref{cor}.

{\noindent \it Proof of Corollary \ref{cor}.}
Using the well-known Euler identity (see \cite[pp. 5]{andrews1998theory})
\[(-q;q)_\infty=\frac{1}{(q;q^2)_\infty},\]
and the generating function of overpartitions \cite{corteel2004overpartitions}
\[
\sum_{n=0}^{\infty}\overline{p}(n)q^n=\frac{(-q;q)_{\infty}}{(q;q)_{\infty}},
\]
we deduce that
\begin{equation}\label{eq-ppp-trip}
	\sum_{n=0}^\infty p\overline{pp}(n)q^n=\frac{1}{(q;q)_\infty}
	\frac{(-q;q)^2_\infty}{(q;q)^2_\infty}=\frac{1}{(q;q)_\infty^3(q;q^2)_\infty^2}.
\end{equation}
Set $q^2$ into $q$ in Theorem \ref{thm-3}, we deduce that
\begin{equation}\label{eqatob-1}
	\Sg(a+b)\frac{1}{(q;q)_{\infty}^{3}(q;q^2)_{\infty}^{2}}\sum_{n=a}^{b}(6n+1)q^{\frac{3n^2+n}{2}}
\end{equation}
has non-negative coefficients of $q^k$ for any $k\ge 1$.

Combining \eqref{eq-ppp-trip} and \eqref{eqatob-1}, we conclude that for any $n\ge 0$,
\[\Sg(a+b)\sum_{i=a}^{b}(6i+1)p\overline{pp}(n-\frac{i(3i+1)}{2})\ge 0.\]
\qed

\noindent{\bf Acknowledgments.}   This work was supported by the National Science Foundation of China grants 12171358 and 12371336.

\end{document}